\newcommand{\mathbbm}[1]{\text{\usefont{U}{bbm}{m}{n}#1}}
\DeclareMathOperator{\Cayley}{Cay}
\newcommand{\ZZ}{\ensuremath{\mathbb Z}}
\newtheorem{thm}{Theorem}[section]
\newtheorem{lem}[thm]{Lemma}
\newtheorem{prop}[thm]{Proposition}
\newtheorem{cor}[thm]{Corollary}
\theoremstyle{definition}
\newtheorem{rem}[thm]{Remark}
\newtheorem{quest}{Question}
\title{Integral Cayley Graphs and Groups}
\date{\today}
\author{Azhvan Ahmady}
 \address{Department of Mathematics, Simon Fraser University, Burnaby, B.C. V5A 1S6}
 \email{ssheikha@sfu.ca}
\author{Jason P.~Bell}
 \thanks{The authors thank NSERC for its generous support.}
 \address{Department of Pure Mathematics, University of Waterloo, Waterloo, Canada}
 \email{jpbell@uwaterloo.ca}
\author{Bojan Mohar}
 \address{Department of Mathematics, Simon Fraser University, Burnaby, B.C. V5A 1S6}
 \email{mohar@sfu.ca}
\subjclass[2010]{05C50, 20C10 (primary), 05C25, 05E10 (secondary)}%
\keywords{Cayley graphs, Cayley groups, CIS groups, Weakly Cayley integral groups, integral eigenvalues}%
\begin{document}
\bibliographystyle{plain}

\begin{abstract}
We solve two open problems regarding the classification of certain classes of Cayley graphs with integer eigenvalues.  We first classify all finite groups that have a ``non-trivial'' Cayley graph with integer eigenvalues, thus solving a problem proposed by Abdollahi and Jazaeri. The notion of Cayley integral groups was introduced by Klotz and Sander. These are groups for which every Cayley graph has only integer eigenvalues. In the second part of the paper, all Cayley integral groups are determined.
\end{abstract}
\maketitle

\section{Introduction}

A graph $X$ is said to be \emph{integral\/} if all eigenvalues of the adjacency matrix
of $X$ are integers.  This property was first defined by Harary and Schwenk \cite{HS} who
suggested the problem of classifying integral graphs. This problem ignited a significant investigation among algebraic graph theorists, trying to construct and classify integral graphs. Although this problem is easy to state, it turns out to be extremely hard. It has been attacked by many mathematicians during the last forty years and it is still wide open. 

Since the general problem of classifying integral graphs seems too difficult, graph theorists started to investigate special classes of graphs, including trees, graphs of bounded degree, regular graphs and Cayley graphs. What proves so interesting about this problem is that no one can yet identify what the integral trees are or which $5$-regular graphs are integral.

The notion of \emph{CIS groups}, that is, groups admitting no integral Cayley graphs besides complete multipartite graphs, was introduced by Abdollahi and Jazaeri \cite{AB}, who classified all abelian CIS groups. The question of which non-abelian groups are CIS remained open, however.
A similar but more intriguing notion of Cayley integral groups was introduced by Klotz and Sander in \cite{KS}.  These are finite groups $G$ with the property that whenever $S$ is a symmetric generating set for $G$, the Cayley graph of $G$ with respect to the generating set $S$ is an integral graph.  Klotz and Sander classified all such groups in the abelian group case, while the general case was left open.

The main results in this paper are Theorems \ref{thm: CIS} and \ref{thm: main} in which we respectively classify all CIS groups and all Cayley integral groups. Although the first of these two results is not very hard to prove, we find it interesting enough since it shows, in particular, that every finite non-abelian group admits a non-trivial Cayley graph whose eigenvalues are all integral.  The classification of Cayley integral groups is more difficult and requires new methods.  In this case we are able to give a complete list of all Cayley integral groups up to isomorphism, with the conclusion being that, aside from a few sporadic examples, all such groups are either abelian or isomorphic to the direct product of the quaternion group of order $8$ with an elementary  abelian $2$-group.

As the setting of integral Cayley graphs suggests, all groups in this paper will be finite.

\section{Group representations and eigenvalues of Cayley graphs}

We require some definitions and notation from representation theory, graph theory and group theory. We will use the standard notation, for a more detailed account the reader is referred to \cite{FH,GS, RO}.
All groups considered are finite (written multiplicatively), and all fields are subfields of the complex numbers.
For two matrices $A$ and $B$ we denote their \emph{Kronecker product} by $A\otimes B$. For an integer $k\ge1$, $I_k$ denotes  the $k\times k$ identity matrix and $J_k$ represents the $k\times k$ matrix of all ones.
For a symmetric (self-inverse) subset $S$ of $G$, we define the \emph{Cayley graph} of $G$ over $S$, denoted $\Cayley(G,S)$, to be the graph with vertex set $G$ and $x,y \in G$ adjacent if $xy^{-1} \in S$.
A group $G$ is called \emph{perfect} if it is equal to its derived subgroup, i.e. $G=G'$.
We denote the group algebra of $G$ over the field $\mathbb{F}$ by $\mathbb{F}G$. That is, $\mathbb{F}G$ is the vector space over $\mathbb{F}$ with basis $G$ and multiplication defined by extending the group multiplication linearly.
Identifying $\sum_{g \in G} a_g g$ with the function $g \mapsto a_g$, we can view the vector space $\mathbb{F}G$ as the space of all $\mathbb{F}$-valued functions on $G$.
We sometimes identify a subset $S$ of $G$ with the element $\sum\limits_{s\in S} s$ of the group algebra $\mathbb{C}G$.

Let $V$ be an $n$-dimensional vector space over $\mathbb{C}$. A \emph{complex representation} (or simply a \emph{representation}) of $G$ on $V$ is a group homomorphism $\rho: G \rightarrow GL(V)$, where $GL(V)$ denotes the group of invertible endomorphism of $V$. The \emph{degree} of $\rho$ is the dimension of $V$.
Two representations $\rho_1$ and $\rho_2$ of $G$ on $V_1$ and $V_2$, respectively, are \emph{equivalent} (written $\rho_1 \cong \rho_2$) if there is a linear isomorphism $T: V_1 \to V_2$ such that for every $g\in G$ we have $T\rho_1(g) = \rho_2(g)T$.

If $\rho$ is a representation of $G$ then the \emph{character} $\chi_{\rho}$ \emph{afforded} by $\rho$, is the linear functional $\chi_{\rho}: \mathbb{C}G\rightarrow \mathbb{C}$ defined by
$$\chi_{\rho}(g)= {\rm tr}(\rho(g)),\quad g \in G,$$
and extended by linearity to $\mathbb{C}G$ (the trace ${\rm tr} (\alpha)$ of a linear map $\alpha$ is the trace of any matrix representing $\alpha$ according to some basis).
The \emph{degree} of the character $\chi_{\rho}$ is the degree of  $\rho$, and is equal to $\chi_{\rho}(1)$. A character of degree one is called a \emph{linear character}. The index $[G:G']$ of derived subgroup $G'$, is equal to the number of
linear characters of $G$. The character $\chi$ which assigns $1$ to every element of a group $G$ is called the \emph{principal character} of $G$ and is denoted by ${\mathbbm 1}_G$.

The $|G|$-dimensional representation $\rho_{{\rm reg}}:G \rightarrow GL(\mathbb{C}G)$ defined by $\rho_{{\rm reg}}(g)(x)=gx$ ($g \in G, x \in \mathbb{C}G)$) is called the
\emph{left-regular representation}. Choosing $G$ as a basis for $\mathbb{C}G$, we see that for every $g \in G$, $[\rho_{{\rm reg}}(g)]_G$ (the matrix representing the linear map $\rho_{{\rm reg}}(g)$ according to the basis $G$) is the $(|G| \times |G|)$-matrix, indexed with the elements of $G$, such that for every $k,h \in G$

\[([\rho_{{\rm reg}}(g)]_G)_{h,k} = \left\{
\begin{array}{l l}
  1 & \quad \mbox{if $h=gk$ }\\
  0 & \quad \mbox{otherwise.}
\end{array} \right. \]

This gives a natural link to Cayley graphs since the adjacency matrix of a Cayley graph $\Cayley(G,S)$ can be written as
\begin{equation}
    A = \sum_{s\in S}\ [\rho_{{\rm reg}}(s)]_G.
\label{eq:adjacency regular representations}
\end{equation}

Let $\rho: G \rightarrow GL(V)$ be a representation. A subspace $W$ of $V$ is said to be $\rho$\emph{-invariant}, if $\rho(g)w \in W$ for every $g \in G$ and $w \in W$.  If $W$ is a $\rho$-invariant subspace of $V$, then the restriction of $\rho$ to $W$, that is $\rho|_W:G \rightarrow GL(W)$, is a representation of $G$ on $W$. If $V$ has no non-trivial $\rho$-invariant subspaces, then $\rho$ is said to be an \emph{irreducible representation} for $G$ and the corresponding character $\chi_{\rho}$ an \emph{irreducible character} for $G$.

If $G$ is abelian, then every irreducible representation $\rho$ of $G$ is 1-dimensional and thus it can be identified with its character $\chi_\rho$.

A result of Diaconis and Shahshahani \cite{DS} shows a link between the representation theory and spectral theory of Cayley graphs.
Let\/ $\rho_1,\dots,\rho_k$ be a complete set of irreducible representations of a group $G$, and suppose that $S$ is a symmetric subset of $G$. For\/ $t=1,\dots,k$, let $d_t$ be the degree of $\rho_t$, and let $\Lambda_t$ be the (multi-)set of eigenvalues of the matrix $\rho_t(S)=\sum\limits_{s \in S}\rho_t(s)$. Then the following holds:
\begin{enumerate}
\item[\rm (1)] The (multi-)set of eigenvalues of\/ $\Cayley(G,S)$ equals
$\cup_{t=1}^k \Lambda_t$.
\item[\rm (2)] If the eigenvalue $\lambda$ occurs with multiplicity $m_t(\lambda)$ in
$\rho_t(S)$ $(1\le t\le k)$, then the multiplicity of $\lambda$ in $\Cayley(G,S)$ is $\sum_{t=1}^k d_t m_t(\lambda)$.
\end{enumerate}

If $G$ is a group and $\chi$ a character of $G$, then the character values are all algebraic integers and thus they are rational if and only if they are integral.
Observe that every representation of $G$ (and in particular the left-regular representation) is a direct sum of some copies of irreducible representations. This fact and the aforementioned result of Diaconis and Shahshahani yield the following criterion for integrality of Cayley graphs that will be used throughout the paper.

\begin{prop}
\label{prop:integrality criterion}
A connected Cayley graph $\Cayley(G,S)$ is integral if and only if for every representation $\rho$ of $G$, $\rho(S)$ is integral.
\end{prop}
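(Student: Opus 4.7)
The proposition will follow rather directly from the Diaconis--Shahshahani result quoted above combined with Maschke's theorem, which says that every complex representation of a finite group is equivalent to a direct sum of irreducible representations. Let $\rho_1,\dots,\rho_k$ denote a complete set of irreducible representations of $G$, and for each $t$ let $\Lambda_t$ be the multiset of eigenvalues of $\rho_t(S)$.

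For the forward direction, suppose $\Cayley(G,S)$ is integral. By part (1) of the Diaconis--Shahshahani theorem, the multiset of eigenvalues of $\Cayley(G,S)$ equals $\bigcup_{t=1}^{k} \Lambda_t$, so each $\Lambda_t$ is contained in $\ZZ$. Now let $\rho$ be an arbitrary representation of $G$. By Maschke's theorem, $\rho$ is equivalent to a direct sum $\rho_{t_1}\oplus\cdots\oplus\rho_{t_m}$ of some of the $\rho_t$'s (with repetitions allowed). Under this equivalence, $\rho(S)$ is similar to the block-diagonal matrix $\rho_{t_1}(S)\oplus\cdots\oplus\rho_{t_m}(S)$, whose spectrum is the union of the spectra of the blocks. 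Since each block has integer eigenvalues, so does $\rho(S)$.

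For the converse, suppose that $\rho(S)$ has integer eigenvalues for every representation $\rho$ of $G$. In particular this holds for each irreducible $\rho_t$, so $\Lambda_t\subset\ZZ$ for every $t$. Applying part (1) of Diaconis--Shahshahani once more, every eigenvalue of $\Cayley(G,S)$ lies in $\bigcup_t \Lambda_t\subset\ZZ$, so $\Cayley(G,S)$ is integral.

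There is no real obstacle here: the entire content is that the spectrum of $\rho(S)$ for a reducible $\rho$ is recovered from the spectra of its irreducible constituents. The connectedness hypothesis, corresponding to $S$ generating $G$, plays no role in this argument; it is included because it is the case relevant to the subsequent results, where we will invoke the criterion to test integrality of specific Cayley graphs by checking $\rho(S)$ on individual (often non-irreducible) representations that are convenient to write down.
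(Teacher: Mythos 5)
Your proof is correct and takes essentially the same route as the paper, which deduces the proposition in a single sentence from exactly the two ingredients you use: the decomposition of an arbitrary representation into a direct sum of irreducibles (Maschke) together with part (1) of the Diaconis--Shahshahani result. Your write-up merely makes the block-diagonal argument explicit, and your side remark that the connectedness hypothesis is not actually used is accurate as well.
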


\section{Cayley integral simple groups}

In this section, we answer a question of Abdollahi and Jazaeri \cite{AB} concerning Cayley integral simple groups. Abdollahi and Jazaeri defined a \emph{Cayley integral simple} group (CIS group for short) to be a group $G$ with the property that the only connected integral Cayley graphs of $G$ are complete multipartite graphs.  In addition to this, they noticed that given a symmetric generating subset $S$ of $G$, $\Cayley(G,S)$ is a complete multipartite graph if and only if $S$ is the complement of a subgroup of $G$.  Thus a simpler definition of a CIS group is that it is a group $G$ with the property that for a symmetric generating set $S$ of $G$, we have that $\Cayley(G,S)$ is an integral graph if and only if $S$ is the complement of a subgroup of $G$.

As part of their study of CIS groups, Abdollahi and Jazaeri gave a complete characterization of abelian CIS groups, which we state now.
\begin{thm}[Abdollahi and Jazaeri \cite{AB}]
\label{AB}
Let $G$ be an abelian group. Then $G$ is a CIS group if and only if $G \cong \ZZ_{p^2}, \ZZ_p$ for some prime number $p$, or $G\cong \ZZ_2 \times \ZZ_2$.
\end{thm}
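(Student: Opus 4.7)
The plan is to exploit the standard characterization, immediate from Proposition~\ref{prop:integrality criterion} once one notes that all irreducible characters of an abelian group are $1$-dimensional, that $\Cayley(G,S)$ is integral if and only if $S$ is a union of the \emph{atoms}
\[
A_H := \{g \in G : \langle g \rangle = H\}
\]
indexed by the cyclic subgroups $H \le G$. Indeed, each character value $\chi(S) = \sum_{s\in S}\chi(s)$ is a sum of roots of unity, hence lies in $\QQ$ iff it is an integer; the rationality of all $\chi(S)$ amounts to invariance of $S$ under the Galois action $g\mapsto g^k$ with $\gcd(k,|G|)=1$, whose orbits are exactly the atoms. Since $G\setminus K$ is visibly a union of atoms for any subgroup $K\le G$, $G$ is CIS if and only if every symmetric generating union of atoms has the form $G\setminus K$ for some subgroup $K\le G$.

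For the sufficiency direction, this is a direct enumeration. In $\ZZ_p$ the only non-trivial atom is $A_G$, the complement of $\{0\}$. In $\ZZ_{p^2}$ there are two non-trivial atoms, $A_{p\ZZ_{p^2}}$ (which does not generate) and $A_G$, so the generating atom-unions are precisely $G\setminus p\ZZ_{p^2}$ and $G\setminus\{0\}$. In $\ZZ_2\times\ZZ_2$ the non-trivial atoms are the three singletons $\{a\},\{b\},\{c\}$, and each of the four generating symmetric subsets is at once the complement of one of the five subgroups.

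For the converse, I would proceed by the invariant-factor decomposition $G \cong \ZZ_{n_1}\times\cdots\times\ZZ_{n_r}$ and produce a ``bad'' union of atoms in each remaining case. If $G = \ZZ_n$ is cyclic with $n$ having two distinct prime factors, take $S = A_G$: its complement is the union of all proper cyclic subgroups of $G$, which cannot be a subgroup because $G$ has more than one maximal subgroup. If $G = \ZZ_{p^k}$ with $k \ge 3$, take $S = A_G \cup A_p$, whose complement has cardinality $p^{k-1}-p+1$, which a short check shows is never a power of $p$. When $G$ is non-cyclic and $G \not\cong \ZZ_2\times\ZZ_2$, split further: if $G$ contains $\ZZ_p\times\ZZ_p$ with $p\ge 3$, the set $A_{H_1}\cup A_{H_2}$ for two distinct order-$p$ subgroups (augmented by further atoms to span $G$ when $G\ne\ZZ_p\times\ZZ_p$) gives a complement of cardinality not dividing $|G|$; if $G = \ZZ_2^k$ with $k \ge 3$, any basis of $G$ gives an $S$ whose complement has size $2^k - k$, not a power of $2$; the residual mixed cases are handled by combining these constructions across primary components.

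The main difficulty is the absence of a uniform construction: the bad $S$ depends sensitively on the prime-power structure of $G$, and each subcase calls for its own cardinality (or closure) obstruction to rule out $G\setminus S$ being a subgroup. The most delicate point is the mixed primary-decomposition case, where one must simultaneously ensure that $S$ generates $G$ and that its complement fails to be a subgroup, so a careful case analysis keyed to the invariant factors seems unavoidable.
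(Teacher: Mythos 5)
Your Galois/atoms reduction is correct (the equivalence of integrality of $\Cayley(G,S)$ with invariance of $S$ under $g\mapsto g^k$, $\gcd(k,|G|)=1$, is exactly the standard criterion, and it is a legitimately different starting point from the paper, which never invokes it), and your cyclic cases are complete -- indeed your treatment of $\ZZ_{p^k}$, $k\ge 3$, via $|G\setminus S|=p^{k-1}-p+1\equiv 1\pmod p$ handles all $k$ at once, whereas the paper first reduces to $k=3$. The genuine gap is in the non-cyclic converse. Your two explicit constructions cover only groups containing $\ZZ_p\times\ZZ_p$ with $p\ge 3$ and the groups $\ZZ_2^k$; everything else -- $\ZZ_4\times\ZZ_2$, $\ZZ_4\times\ZZ_4$, $\ZZ_2^2\times\ZZ_4$, $\ZZ_2^2\times\ZZ_9$, and in general every non-cyclic group whose odd primary part is cyclic and whose $2$-part is not elementary abelian -- lands in your ``residual mixed cases,'' and ``combining these constructions across primary components'' is not an argument (for $\ZZ_4\times\ZZ_2$ there is only one primary component to combine). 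Worse, your intended obstruction -- complement cardinality violating Lagrange -- can genuinely fail exactly there: in $G=\ZZ_2\times\ZZ_4$ the symmetric generating union of atoms $S=\{(1,0),(0,1),(0,2),(0,3)\}$ has complement $\{(0,0),(1,1),(1,2),(1,3)\}$ of size $4$, which divides $8$; one must instead check closure ($(1,1)+(1,1)=(0,2)\notin G\setminus S$). So the ``delicate'' part you defer is not a matter of bookkeeping: the counting obstruction is the wrong tool for the residual bucket, and no construction is supplied for it.

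The case analysis you call unavoidable is in fact avoided in the paper by a single uniform construction, which is the missing idea: if $G=A\times\ZZ_m$ with $m>2$ and $|A|\ge 2$, take $S=(A\setminus\{0\})\cup(\ZZ_m\setminus\{0\})$. Then $\Cayley(G,S)\cong K_{|A|}\,\Box\,K_m$ is integral (in your language, $S$ is a union of atoms), and the complement is never a subgroup: $(a,1)$ and $(-a,1)$ lie in $G\setminus S$ for any $a\ne 0$, but their sum $(0,2)$ lies in $S$ since $m>2$. Decomposing $G$ into cyclic factors of prime-power order, this one construction disposes of every abelian group that is neither cyclic of prime-power order nor $\ZZ_2^n$ -- precisely your residual cases -- after which only $\ZZ_{p^n}$ and $\ZZ_2^n$ remain; the paper then shrinks these to $n=3$ using the subgroup-closure property of CIS groups (Lemma \ref{subgroup CIS}), a tool your proposal never invokes. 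To repair your proof, import that Cartesian-product construction (it is compatible with your atoms framework and replaces all cardinality arguments in the non-cyclic case by one closure computation); as written, the converse direction is established only for cyclic groups, $\ZZ_2^k$, and groups containing $\ZZ_p^2$ with $p$ odd.
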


In addition to this, they posed the following question.
\begin{quest}
Which finite non-abelian groups are CIS groups?
\end{quest}

We answer their question with the perhaps surprising answer that non-abelian finite CIS groups do not exist. More formally, we show the following result.

\begin{thm}
Let $G$ be a CIS group. Then $G$ is abelian and in particular is isomorphic to either a cyclic group of order $p$ or $p^2$ for some prime $p$, or is isomorphic to $\mathbb{Z}_2^2$.
\label{thm: CIS}
\end{thm}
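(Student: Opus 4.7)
The plan is to prove the contrapositive: every finite non-abelian group $G$ admits a symmetric generating subset $S \subseteq G \setminus \{1\}$ for which $\Cayley(G,S)$ is integral while $G \setminus S$ is not a subgroup of $G$. Combined with Theorem~\ref{AB}, this yields the stated classification.

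My main construction is as follows. For any proper subgroup $H < G$ and any symmetric $T \subseteq H \setminus \{1\}$, I set $S = (G \setminus H) \cup T$. First, $G \setminus H$ already generates $G$ for any proper $H$: if $\langle G \setminus H \rangle = K$ were proper, Lagrange together with $G \setminus H \subseteq K$ would force $H \cup K = G$ with both proper subgroups, impossible. Next, partitioning $G$ into right cosets of $H$, the induced subgraph on each coset is isomorphic to $\Cayley(H,T)$, and between distinct cosets every possible edge appears. Diagonalising via the tensor decomposition $\mathbb{C}^G \cong \mathbb{C}^{[G:H]} \otimes \mathbb{C}^H$, the spectrum of $\Cayley(G,S)$ turns out to be $|S|$ (multiplicity $1$), $|T|-|H|$ (multiplicity $[G:H]-1$), and $[G:H]$ copies of each eigenvalue of $\Cayley(H,T)$ coming from vectors orthogonal to $\mathbf{1}_H$. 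Consequently $\Cayley(G,S)$ is integral iff $\Cayley(H,T)$ is, and $G \setminus S = H \setminus T$ is a subgroup of $G$ iff it is a subgroup of $H$.

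I apply this universally with $T = K \setminus \{1\}$ for any subgroup chain $\{1\} < K < H < G$. Then $\Cayley(H,T)$ is a disjoint union of $[H:K]$ copies of $K_{|K|}$, hence integral, while $|H \setminus T| = |H|-|K|+1$ does not divide $|H|$: writing $|H| = n|K|$ with $n \geq 2$, we have $|H| = ((n-1)|K|+1) + (|K|-1)$ with $(n-1)|K|+1 > |K|-1$, so any common divisor of the two summands divides $|K|-1$, precluding $|H|-|K|+1$ itself. Thus $H \setminus T$ is not a subgroup. This construction handles every non-abelian $G$ admitting such a chain, which by Cauchy's theorem combined with solvability and Hall's theorem for squarefree-order groups is every non-abelian $G$ except those of order $pq$ for distinct primes $p,q$, namely the non-abelian Frobenius groups $G = \mathbb{Z}_q \rtimes \mathbb{Z}_p$ (with $p \mid q-1$).

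For this exceptional case I switch to a direct construction: with $H_1 = \mathbb{Z}_q$, $H_2 = \mathbb{Z}_p$ the Sylow subgroups, take $S = (H_1 \cup H_2) \setminus \{1\}$. I will show $\Cayley(G,S) \cong K_q \,\square\, K_p$, the Cartesian product of complete graphs: the $p$ left cosets of $H_1$ serve as ``rows'' of a $p \times q$ grid, each inducing a copy of $K_q$ via $H_1$-edges, and the $q$ orbits of $H_2$ acting on $G$ by left multiplication serve as ``columns'', each inducing a copy of $K_p$ via $H_2$-edges, with each row meeting each column in exactly one vertex. Since $K_q \,\square\, K_p$ is integral and $|G \setminus S| = 1 + (p-1)(q-1)$ fails to divide $pq$ (for $p \geq 3$ by counting, and for $p = 2$ by a direct check that $G \setminus S$ is not closed under multiplication), $G \setminus S$ is not a subgroup. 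The main obstacle is precisely this case $|G| = pq$: the block-matrix argument is unavailable, and one must verify the Cartesian-product description of $\Cayley(G,S)$ by hand --- subtle because when the semidirect action is non-trivial, the $H_2$-orbits ``zig-zag'' across the $H_1$-cosets rather than stacking neatly above them, yet they still cover each row exactly once and form the required $K_p$ columns.
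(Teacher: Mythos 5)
Your proposal is correct, and it takes a genuinely different route from the paper. The paper argues by minimal counterexample: it shows the CIS property passes to subgroups and quotients (Lemma \ref{subgroup CIS}), invokes the Miller--Moreno theorem to constrain a minimal non-abelian CIS group, and then kills the base cases $D_4$, $Q_8$, $A_4$, and $\ZZ_p\rtimes\ZZ_q$ one by one with representation-theoretic eigenvalue computations (Lemma \ref{bc}). You avoid the induction and Miller--Moreno entirely: your tensor diagonalization of the adjacency matrix of $\Cayley\bigl(G,(G\setminus H)\cup T\bigr)$ is exactly the computation the paper performs \emph{inside} its proof of Lemma \ref{subgroup CIS}, but you deploy it once, directly, with $T=K\setminus\{1\}$ along a chain $\{1\}<K<H<G$, so that $\Cayley(H,T)$ is a disjoint union of complete graphs (hence integral) while the count $|H|-|K|+1 \nmid |H|$ rules out the complement being a subgroup; the price is needing solvability of squarefree-order groups plus Hall to produce the chain (your one-line reduction is slightly glossed --- the prime-power case uses the subgroup structure of $p$-groups, and a minimal normal subgroup would give the squarefree chain more cheaply --- but all of this is routine and correct). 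For the chainless groups of order $pq$, your generating set $(H_1\cup H_2)\setminus\{1\}$ is in fact the same one the paper uses in Lemma \ref{bc}, but your identification of the graph as the rook's graph $K_p\,\Box\,K_q$ is clean and needs no representation theory: since $H_1\cap H_2=1$ and $|H_1|\,|H_2|=|G|$, each $H_1$-coset meets each $H_2$-coset in exactly one vertex, so the ``zig-zag'' you worry about is harmless. Notably, your explicit treatment of $p=2$ is a point where you are \emph{more} careful than the paper: there $|G\setminus S|=q$ divides $2q$, so the divisibility shortcut fails and one must check closure directly (in $D_q$, with $G\setminus S=\{1\}\cup\{r^is : 1\le i\le q-1\}$, one has $(rs)(r^2s)=r^{-1}\in S$), whereas the paper's Lemma \ref{bc} restricts to $2<p<q$ after asserting that semidirect products involving $\ZZ_2$ are abelian, which overlooks the dihedral groups $\ZZ_q\rtimes\ZZ_2$.
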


Our proof of Theorem \ref{thm: CIS} is essentially an induction argument on the order of the group and to make the argument work we need the following result.

\begin{thm}[Miller and Moreno \cite{MM}]
Let $G$ be a non-abelian group with the property that every proper subgroup is abelian.  Then $|G|$ has at most two prime divisors and there is some prime $p$ dividing the order of $G$ such that the Sylow $p$-subgroup of $G$ is normal.
\label{thm: MM}
\end{thm}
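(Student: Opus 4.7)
The plan is to follow the classical approach of Miller and Moreno, splitting into the nilpotent and non-nilpotent cases for $G$ and using repeatedly that every proper subgroup is abelian, and hence nilpotent.

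If $G$ is nilpotent, then $G$ is the internal direct product of its Sylow subgroups. If two or more distinct primes divided $|G|$, then each Sylow subgroup would be a proper subgroup of $G$, hence abelian by hypothesis, so $G$ itself would be abelian---contradicting the assumption that $G$ is non-abelian. Therefore $|G|$ is a prime power, and both conclusions of the theorem hold trivially, with $G$ itself serving as the normal Sylow subgroup.

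In the non-nilpotent case, I would pick a prime $p \mid |G|$ and a Sylow $p$-subgroup $P$; since $G$ is not a $p$-group, $P$ is proper and so abelian. If $N_G(P)$ is a proper subgroup of $G$, it is abelian and contains $P$, so every element of $N_G(P)$ commutes with $P$, yielding $N_G(P) = C_G(P)$ (combined with $C_G(P) \leq N_G(P)$). Burnside's normal $p$-complement theorem then supplies a normal Hall $p'$-subgroup $K$ of $G$. As $|K| = |G|/|P| < |G|$, $K$ is proper and hence abelian, so it decomposes as $K = \prod_{i=1}^{k} K_{q_i}$, where each $K_{q_i}$ is the unique Sylow $q_i$-subgroup of $K$, characteristic in $K$ and hence normal in $G$; and since $|G:K| = |P|$ is a power of $p$, each $K_{q_i}$ is also a Sylow $q_i$-subgroup of $G$. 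If $k \geq 2$, then picking any Sylow $p$-subgroup $P'$, the subgroup $P' K_{q_i}$ has order $|P| \cdot |K_{q_i}| < |G|$, so it is proper and abelian for each $i$; this forces $P'$ to centralize every $K_{q_i}$, hence all of $K$, making $G = K P'$ abelian, a contradiction. Hence $k = 1$, so $K$ is a normal Sylow $q$-subgroup of $G$ and $|G| = |P| \cdot |K|$ has at most two prime divisors. In the remaining subcase $N_G(P) = G$, the subgroup $P$ itself is a normal Sylow $p$-subgroup, and a parallel argument applied to a Schur--Zassenhaus complement $Q$ of $P$ (which exists and is proper abelian, since $P$ is an abelian normal Hall subgroup) shows that $Q$ must be a $q$-group; otherwise, writing $Q = Q_q \times Q_{q'}$, the subgroups $P Q_q$ and $P Q_{q'}$ would each be proper and abelian, forcing $Q$ to centralize $P$ and making $G$ abelian.

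The main technical obstacle is ensuring that the normal Hall $p'$-subgroup $K$ produced by Burnside's theorem is in fact a Sylow subgroup, rather than a Hall subgroup with several prime divisors. The centralization-via-abelian-proper-subgroups step is precisely what collapses this Hall subgroup to a single prime, and it is the same mechanism that simultaneously bounds the number of prime divisors of $|G|$ by two, giving both conclusions of the theorem in one stroke.
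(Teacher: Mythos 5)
Your proof is correct, but note that there is nothing in the paper to compare it with: the paper states this result as a known classical theorem, citing Miller and Moreno \cite{MM}, and uses it as a black box in the proof of Theorem \ref{thm: CIS}; no proof is given there. Measured against the original 1903 argument, your route is the standard modern streamlining and all the steps check out. The nilpotent case is handled exactly right by the Sylow direct-product decomposition. In the non-nilpotent case, your application of Burnside's normal $p$-complement theorem is legitimate: since $N_G(P)$ is proper, hence abelian, and contains $P$, you get $P\le Z(N_G(P))$, which is precisely Burnside's hypothesis; the resulting normal Hall $p'$-subgroup $K$ is proper and abelian, each $K_{q_i}$ is characteristic in $K$ and therefore normal in $G$ (which is what makes $P'K_{q_i}$ a subgroup), properness of $P'K_{q_i}$ uses $k\ge 2$, and $K\neq 1$ because a non-nilpotent $G$ is not a $p$-group. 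The collapsing step -- each $P'K_{q_i}$ proper and abelian forces $P'$ to centralize all of $K$, making $G=P'K$ abelian -- simultaneously delivers both conclusions, as you say. In the subcase $N_G(P)=G$ your appeal to Schur--Zassenhaus is also fine, and in fact only the elementary case of that theorem (abelian normal Hall subgroup, due to Schur) is needed since $P$ is abelian. The trade-off relative to Miller and Moreno's original proof, which is an elementary counting analysis of maximal subgroups predating transfer theory, is that you invoke heavier standard machinery (Burnside transfer, Schur--Zassenhaus) in exchange for a much shorter and more transparent argument; for the purposes of this paper either would do, since the theorem is quoted rather than reproved.
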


We will also need a basic result showing that the CIS property is closed under the process of taking subgroups and homomorphic images.

\begin{lem}
\label{subgroup CIS}
Let $G$ be a CIS group. Then every subgroup and every homomorphic image of $G$ is also a CIS group.
\end{lem}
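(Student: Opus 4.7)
The plan is to prove both statements by contrapositive. Recall that the converse direction of the CIS property is automatic (the complement of a subgroup always gives a complete multipartite Cayley graph, which is integral), so it suffices to prove the forward direction. That is, assuming $H \leq G$ (respectively $\bar G = G/N$) is not CIS, I will produce a symmetric generating set $S \subseteq G$ with $\Cayley(G,S)$ integral but $G \setminus S$ not a subgroup of $G$, contradicting the CIS property of $G$.

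For the subgroup case, let $T \subseteq H$ be a symmetric generating set of $H$ with $\Cayley(H,T)$ integral and $H \setminus T$ not a subgroup. Set $S := T \cup (G \setminus H)$; then $S$ is symmetric, avoids $1$, and generates $G$. Indexing $G$ by right cosets of $H$, the adjacency matrix of $\Cayley(G,S)$ takes the Kronecker form
\[
A \;=\; I_k \otimes B \;+\; (J_k - I_k) \otimes J_{|H|},
\]
where $k = [G:H]$ and $B$ is the adjacency matrix of $\Cayley(H,T)$. A standard eigenvector analysis (splitting into vectors $v \otimes \mathbf{1}$ for $v$ an eigenvector of $B$, and $w \otimes u$ with $u \perp \mathbf{1}$) yields the spectrum: $|T| + (k-1)|H|$ once, $|T| - |H|$ with multiplicity $k-1$, and each non-principal eigenvalue of $B$ with multiplicity $k$. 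In particular $\Cayley(G,S)$ is integral. A direct computation gives $G \setminus S = H \setminus T$, so $G \setminus S$ is not a subgroup, as required.

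For the homomorphic image case, I may assume the kernel $N$ of the quotient map $\pi \colon G \to \bar G$ is nontrivial (otherwise $\bar G \cong G$). Let $\bar T$ be a symmetric generating set of $\bar G$ with $\Cayley(\bar G, \bar T)$ integral and $\bar G \setminus \bar T$ not a subgroup. Set $S := \pi^{-1}(\bar T) \cup (N \setminus \{1\})$; then $S$ is symmetric, avoids $1$, and generates $G$ (the second summand supplies $N$, while $\pi^{-1}(\bar T)$ surjects onto a generating set of $\bar G$). Indexing $G$ by cosets of $N$, the adjacency matrix of $\Cayley(G,S)$ has the Kronecker form
\[
A \;=\; \bar B \otimes J_{|N|} \;+\; I_{|\bar G|} \otimes (J_{|N|} - I_{|N|}),
\]
where $\bar B$ is the adjacency matrix of $\Cayley(\bar G, \bar T)$. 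The same style of analysis gives the spectrum $\{\, |N|\bar\mu + |N| - 1 : \bar\mu \in \mathrm{spec}(\bar B)\,\}$ together with $|\bar G|(|N|-1)$ copies of $-1$, hence $\Cayley(G,S)$ is integral. A direct calculation shows $\pi(G \setminus S) = \bar G \setminus \bar T$; since $\bar G \setminus \bar T$ is not a subgroup and any image of a subgroup under $\pi$ is a subgroup of $\bar G$, we conclude that $G \setminus S$ is not a subgroup either.

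I do not anticipate a serious obstacle. The two eigenvalue computations are routine Kronecker manipulations once the block structures have been identified. The delicate point is the design of $S$: one must include enough of $G \setminus H$ (respectively of $N$) for $S$ to generate $G$, while simultaneously avoiding the identity and producing an adjacency matrix that factors as a sum of two Kronecker products whose spectra can be read off in terms of those of $B$ or $\bar B$. The specific choices $S = T \cup (G \setminus H)$ and $S = \pi^{-1}(\bar T) \cup (N \setminus \{1\})$ achieve all three requirements at once.
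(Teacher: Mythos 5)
Your proof is correct, and the subgroup half is essentially identical to the paper's: the same set $S=T\cup(G\setminus H)$, the same Kronecker decomposition (the paper writes it as $A_T\otimes I_k+J_{|H|}\otimes(J_k-I_k)$, yours transposes the factors), and the same eigenvector analysis. The quotient half deviates mildly: the paper takes the bare preimage $T=\pi^{-1}(\bar T)$, without adjoining $N\setminus\{1\}$, so its adjacency matrix is the pure Kronecker product $J_{|N|}\otimes \bar B$, whose spectrum is read off instantly as $|N|\cdot\mathrm{spec}(\bar B)$ together with zeros; it then shows $G\setminus T$ is not a subgroup by lifting elements $\bar g_1,\bar g_2\notin\bar T$ with $\bar g_1\bar g_2\in\bar T$ directly, rather than via your (also valid) image-of-a-subgroup argument. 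Your extra generators $N\setminus\{1\}$ are harmless but unnecessary: since $\bar T$ is symmetric, $\pi^{-1}(\bar T)$ contains whole cosets $Ns$ and $Ns^{-1}$, so products $ns\cdot s^{-1}=n$ already put all of $N$ inside $\langle \pi^{-1}(\bar T)\rangle$, and generation of $G$ follows. What your variant buys is a self-contained reason for connectivity at the cost of a slightly more involved matrix ($\bar B\otimes J_{|N|}+I_{|\bar G|}\otimes(J_{|N|}-I_{|N|})$) and a complement that is no longer a union of $N$-cosets; what the paper's version buys is the cleaner one-line spectrum and a complement argument by explicit elements. Both are complete; there is no gap in yours.
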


\begin{proof}
Let us assume $H$ is a subgroup of $G$. We will show that $H$ is a CIS group. Suppose, towards a contradiction, that $H$ is not a CIS group.
Then there is a self-inverse generating subset $S$ of $H$ such that $1\not\in S$, $H\setminus S$ is not a subgroup of $H$, and such that $\Cayley(H,S)$ is an  integral graph.
We take $T=S\cup (G\setminus H)$.  Then $T$ generates $G$, $1\not\in T$, and $G\setminus T=H\setminus S$ is not a subgroup of $G$.  Then the adjacency matrix of $\Cayley(G,T)$ is given by
$B:=A_S\otimes I_k + J_n \otimes (J_k-I_k)$, where $A_S$ is the adjacency matrix of $\Cayley(H,S)$, $n=|H|$, $k=[G:H]$.  Since $A_S$ is Hermitian, it is unitarily diagonalizable.  In particular, we can find an orthogonal basis of eigenvectors of $A_S$, $\{{\bf w}_1,\ldots ,{\bf w}_n\}$.  We may assume that ${\bf w}_1={\bf j}$, the vector whose coordinates are all equal to one.  Thus there are integers $|S|=\lambda_1,\ldots ,\lambda_n$ such that $A_S{\bf w}_i=\lambda_i{\bf w}_i$ for $i=1,\ldots ,n$.  Notice that since $J_n$ has each of its columns equal to ${\bf j}={\bf w}_1$, we have $J_n{\bf w}_1 = n{\bf w}_1$ and
$J_n{\bf w}_i = 0$ for $i\ge2$.  Since $J_k$ is symmetric and of rank $1$, it has eigenvectors
${\bf u}_1,\ldots ,{\bf u}_k$ with $J_k{\bf u}_i =k\delta_{1,i} {\bf u}_i$.  Then ${\mathcal S} = \{{\bf w}_i\otimes {\bf u}_j \mid 1\le i\le n, \, 1\le j\le k\}$ is a basis of $\mathbb{C}^n\otimes \mathbb{C}^k$.  Observe that
$B({\bf w}_i\otimes {\bf u}_j) = (\lambda_i+\delta_{1,i}n(k\delta_{1,j}-1)){\bf w}_i\otimes {\bf u}_j$.  In particular, ${\mathcal S}$ is a complete set of eigenvectors of $B$ and thus every eigenvalue of $B$ is an integer.  Thus $G$ is not a CIS group, a contradiction.

For the second part, let us assume $N$ is a proper normal subgroup of $G$. We show, using a proof by contradiction, that $G/ N$ is a CIS group.
If $G/N$ is not a CIS group, then there exists a self-inverse generating subset $\bar{S}$ of $G/N$ such that $N\not\in \bar{S}$, $G/N\setminus \bar{S}$ is not a subgroup of $G/N$ and $\Cayley(G/N,\bar{S})$ is an  integral graph.
Let us assume $\bar{S}=\{Ns \mid s \in S\}$, where the set $S$ is symmetric in $G$. We have $S\cap N=\emptyset$. If we define $T=\mathop{\cup}_{s\in S}Ns$, then clearly $T$ is a self-inverse subset of $G$ such that $1\not\in T$.
Since $G/N\setminus \bar{S}$ is not a subgroup of $G/N$, there are $g_1$ and $g_2$ in $G$ such that $Ng_1$ and $Ng_2$ do not belong to $\bar{S}$ but $Ng_1g_2\in \bar{S}$. Thus, we get $g_1g_2\in T$ and $\{g_1, g_2\}\cap T=\emptyset$.
This proves that $G \setminus T$ is not a subgroup of $G$. It is easy to see that the adjacency matrix of $\Cayley(G,T)$ is $J_k\otimes A_{G/N}$, where $k =|N|$ and $A_{G/N}$ is the adjacency matrix of $\Cayley(G/N,\bar{S})$.
Since $J_k$ has eigenvalues $0$ with multiplicity $k-1$  and $k$ with multiplicity $1$, we deduce that $\Cayley(G,T)$ is an integral graph and thus $G$ is not a CIS group which contradicts our assumption.
Therefore, $G/N$ is a CIS group.
\end{proof}

For the sake of completeness, we present a rather short proof of Theorem \ref{AB} based on above result.

\begin{proof}[Proof of Theorem \ref{AB}.]
It is easy to see that $\ZZ_2\times \ZZ_2$, $\ZZ_{p^2}$, and $\ZZ_p$ (where $p$ is a prime) are CIS groups.
To prove that there are no other abelian CIS groups, let us assume that $G$ is an abelian CIS group of minimum order that is not isomorphic to $\ZZ_2\times \ZZ_2$, $\ZZ_{p^2}$, $\ZZ_p$.

Suppose first that $G=A \times\ZZ_m$, where $m>2$ and $|A|\ge2$.
Let $S_1=A\setminus \{0\}$, $S_2=\ZZ_m\setminus \{0\}$ and $S=S_1\cup S_2$.
We notice that the Cayley graph $\Cayley(G,S)$ is isomorphic to the Cartesian product
$\Cayley(A,S_1) \,\Box\, \Cayley(\ZZ_m,S_2)$.
Since $\Cayley(A,S_1)$ and $\Cayley(\ZZ_m,S_2)$ are complete graphs and therefore integral, their Cartesian product $\Cayley(G,S)$ is also integral. Since $G\setminus S$ is not a subgroup of $G$, we conclude that $G$ is not a CIS group.

Since every abelian group is a direct product of cyclic subgroups whose orders are powers of primes, we conclude from the above that $G$ is either isomorphic to $\ZZ_{p^n}$ or to $\ZZ_2^n$ for some $n>2$. Since subgroup of CIS groups are CIS, and since $\ZZ_{p^n}$ ($\ZZ_2^n$) contains $\ZZ_{p^{n-1}}$ ($\ZZ_2^{n-1}$) as a subgroup, we may assume that $n=3$.
However, $\ZZ_{p^3}$ and $\ZZ_2^3$ are not CIS as evidenced by the following generating sets.
For $\ZZ_2^3$ we take $S=\{(0,0,1),(0,1,0),(1,0,0)\}$ whose Cayley graph is the 3-cube having only integral eigenvalues.
For $\ZZ_{p^3} = \langle a \mid a^{p^3}=1\rangle$, we let $Y = \{a^{tp}\mid t=0,1,\dots,p^2-1\}$ and $Z = \{a^{tp^2}\mid t=0,1,\dots,p-1\}$. Now, take $S = \ZZ_{p^3}\setminus(Y\setminus Z)$ whose complement contains $a^p$ but not $a^{p^2}$, and is therefore not a subgroup. The corresponding Cayley graph is easily seen to be integral. This completes the proof.
\end{proof}

Our inductive proof of Theorem \ref{thm: CIS} requires considering a few base cases, which are covered by the following lemma.

\begin{lem}
\label{bc}
Suppose that $G$ is one of the following groups:
$D_4$, $Q_8$, $A_4$, or a non-abelian semi-direct product of two cyclic groups of prime order.
Then $G$ is not a CIS group.
\end{lem}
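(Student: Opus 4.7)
The plan is to invoke Proposition~\ref{prop:integrality criterion}: for each of the four groups $G$ I would exhibit a symmetric subset $S\subset G\setminus\{1\}$ that generates $G$, such that $G\setminus S$ is not a subgroup and $\rho(S)$ has integer spectrum for every irreducible representation $\rho$ of $G$. This produces an integral Cayley graph whose connection set is not the complement of a subgroup, contradicting CIS.

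For the three sporadic groups the construction is direct and the verification collapses to a small finite computation on the character table. For $D_4=\langle a,b\mid a^4=b^2=(ab)^2=1\rangle$ I would take $S=\{a,a^{-1},b\}$; the complement has $5$ elements and hence cannot be a subgroup, and in the unique $2$-dimensional irreducible the cancellation $\rho(a)+\rho(a^{-1})=0$ leaves $\rho(S)=\rho(b)$, with spectrum $\{1,-1\}$. For $Q_8$ I would take $S=\{\pm i,\pm j,-1\}$; the complement $\{1,\pm k\}$ fails to be closed since $k^2=-1\in S$, and the analogous cancellation $\rho(i)+\rho(-i)=\rho(j)+\rho(-j)=0$ reduces the $2$-dim case to $\rho(-1)=-I_2$. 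For $A_4$ I would take $S=(V\setminus\{1\})\cup\{t,t^{-1}\}$, where $V$ is the Klein four subgroup and $t$ a $3$-cycle; the complement has $7$ elements. The only non-routine check is on the $3$-dimensional irreducible $\rho$: the restriction $\rho|_V$ is the sum of the three non-trivial characters of $V$, so $\rho(V\setminus\{1\})=-I$, and $\rho(t)+\rho(t^{-1})$ has eigenvalues $\{2,-1,-1\}$ since $\rho(t)^3=I$ and $\mathrm{tr}\,\rho(t)=0$; hence $\rho(S)$ has integer spectrum $\{1,-2,-2\}$.

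The main obstacle is the infinite family $G=\langle x\rangle\rtimes\langle y\rangle$ with $|x|=p$ and $|y|=q$ primes and $yxy^{-1}=x^r$ for some $r$ of multiplicative order $q$ modulo $p$. My plan is to take $S=(\langle x\rangle\setminus\{1\})\cup(\langle y\rangle\setminus\{1\})$, which evidently generates $G$; its complement $\{1\}\cup\{x^iy^j:i,j\ne 0\}$ is not closed, since a product $(x^ay^j)(x^by^{-j})=x^{a+br^j}$ falls in $\langle x\rangle\setminus\{1\}\subset S$ for a suitable choice of $a,b,j$. The irreducibles of $G$ are the $q$ linear characters factoring through $G/\langle x\rangle\cong\ZZ_q$, together with $(p-1)/q$ representations $\pi_k$ of degree $q$ induced from the non-trivial characters of $\langle x\rangle$, whose $\langle y\rangle$-orbits all have full size $q$. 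The two identities I want to use, in a basis indexed by such an orbit, are $\pi_k\!\left(\sum_{i=1}^{p-1}x^i\right)=-I_q$ (each diagonal entry is a complete sum of non-trivial $p$-th roots of unity) and $\pi_k\!\left(\sum_{j=1}^{q-1}y^j\right)=J_q-I_q$ (because $\pi_k$ restricted to $\langle y\rangle$ is the regular representation of $\ZZ_q$). Combining them gives $\pi_k(S)=J_q-2I_q$, with integer spectrum $\{q-2,-2,\dots,-2\}$, and the linear characters evaluate to $p+q-2$ or $p-2$. Thus each group in the lemma admits a non-trivial integral Cayley graph and is not CIS.
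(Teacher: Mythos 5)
Your proposal is correct, and at the top level it follows the paper's own strategy: for each group exhibit a symmetric generating set $S$ with $G\setminus S$ not a subgroup, then verify integrality one irreducible representation at a time via Proposition~\ref{prop:integrality criterion}. For $D_4$ and $Q_8$ your sets are literally the paper's ($\{x,x^{3},y\}$ and $\{\pm i,\pm j,-1\}$), with the same cancellation $\rho(u)+\rho(u^{-1})=0$ in the faithful $2$-dimensional representation. You deviate in two places. For $A_4$ the paper uses the nine-element set consisting of one double transposition together with all eight $3$-cycles, exploiting the fact that the sum of the $3$-cycles is central and the characters are integer-valued; your five-element set $(V\setminus\{1\})\cup\{t,t^{-1}\}$ with the direct spectral check $\rho(V\setminus\{1\})=-I$, $\rho(t)+\rho(t^{-1})\sim\{2,-1,-1\}$ is equally valid and somewhat leaner. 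In the semidirect family you take the paper's set $S=(\langle x\rangle\cup\langle y\rangle)\setminus\{1\}$, but you verify integrality by writing out the induced degree-$q$ irreducibles explicitly (obtaining $\pi_k(S)=J_q-2I_q$), where the paper instead uses centrality of $x+\cdots+x^{p-1}$ and the fact that $\sum_{j}\phi(y)^j$ is a polynomial in $\phi(y)$; these are equivalent computations. The substantive difference is your proof that $G\setminus S$ is not a subgroup: you exhibit a concrete failure of closure, $(x^ay^j)(x^by^{-j})=x^{a+br^j}\in S$ for suitable $a,b,j$, whereas the paper argues that $|G\setminus S|=pq-p-q+2$ does not divide $pq$, a count that genuinely requires both primes odd (for $q=2$ one gets $|G\setminus S|=p$, which divides $2p$). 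The paper excludes the case $q=2$ by asserting that ``semidirect products of $\ZZ_2$ with $\ZZ_p$ are abelian,'' which disposes of $\ZZ_2\rtimes\ZZ_p$ but not of the dihedral groups $\ZZ_p\rtimes\ZZ_2$; since Lemma~\ref{bc} is later invoked for $\ZZ_p\rtimes\ZZ_q$ with $q=2$ not excluded (e.g.\ $S_3$), your uniform closure argument actually covers a case the paper's written proof skirts, which is a small but real improvement.
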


\begin{proof}
We write $D_4=\langle x,y~|~x^4=y^2=1, yxy=x^{-1}\rangle$. Then $S=\{x, x^3,y\}$ is a symmetric generating set whose complement is not a subgroup of $D_4$. The Cayley graph $\Cayley(D_4,S)$ is isomorphic to the graph of the 3-cube which is integral. Thus $D_4$ is not a CIS group.


Let us now consider the group of quaternions, $Q_8=\{\pm i,\pm j, \pm k,\pm 1\}$.  We let $S=\{\pm i,\pm j,-1\}$.  Then $S$ is a symmetric generating set and its complement is not a subgroup.  If $\theta$ is the element of the group algebra of $Q_8$ corresponding to $S$ then $\theta$ is sent to $-I$ in the irreducible $2$-dimensional representation of $Q_8$; in the one-dimensional representations it is sent to an element in $\{-3, 1, 5\}$. This implies that $\Cayley(Q_8,S)$ is integral, and that
$Q_8$ is not a CIS group.

We next consider $A_4$.
We take $S=\{(12)(34), (123),(132),\allowbreak (124),(142),\allowbreak (234),\allowbreak (243),\allowbreak (134),(143)\}\subseteq A_4$.  The complement is not a subgroup since $(13)(24)$ and $(14)(23)$ are in the complement and their product is in $S$. Let $\theta$ denote the element
$$(12)(34)+(123)+(132)+(124)+(142)+(234)+(243)+(134)+(143)$$
of the group algebra of $A_4$ corresponding to $S$. Since the sum of all three-cycles, which we call $z$, is central in $A_4$, we have that $\rho(z)$ is a scalar multiple of the identity for every irreducible representation $\rho$ of $A_4$.  Since all characters of $A_4$ are integer-valued, we see that $\rho(z)$ must be an integral multiple of the identity.  Thus $\rho(\theta)$ has eigenvalues equal to the eigenvalues of $\rho((12)(34))$ shifted by an integer.  The eigenvalues of $\rho((12)(34))$ are in $\{\pm 1\}$, since $(12)(34)$ has order $2$.  This shows that all eigenvalues of $\rho(\theta)$ are integers.  Thus $A_4$ is not a CIS group.

Finally, let $G$ be a non-abelian semidirect product of two groups of prime order. Since every semidirect product of $\ZZ_p$ with $\ZZ_p$ and every semidirect product of $\ZZ_2$ with $\ZZ_p$ is abelian, we may assume that $G=\langle x~|~x^p=1\rangle \rtimes \langle y~|~y^q=1\rangle$, where $2 < p < q$ are distinct primes. Let $S=\{x,x^2,\ldots ,x^{p-1},y,y^2,\ldots ,y^{q-1}\}$. Then $S$ is symmetric and generates $G$. Since $p>2$, $|G\setminus S|=pq-p-q+2$ does not divide $pq$, thus $G\setminus S$ is not a group. Notice that the element $z=x+x^2+\cdots +x^{p-1}$ is central in the group algebra of $G$.  It follows that if $\phi$ is any irreducible representation of $G$ then $\phi(z)$ is a scalar multiple of the identity.  In fact, since $x$ has order $p$, all of the eigenvalues of $\phi(x)$ are $p$-th roots of unity and hence $\phi(z)=(p-1)I$ (if $\phi(x)=1$), or $\phi(z)=-I$ (when $\phi(x)\ne1$). Thus, $\phi(z)$ has eigenvalues in $\{-1,p-1\}$.Similarly, since the eigenvalues of $\phi(y)$ are $q$-th roots of unity, we conclude that $\phi(y)+\phi(y^2)+\cdots +\phi(y^{q-1})$ has eigenvalues in $\{-1,q-1\}$.  Thus $z+y+\cdots +y^{q-1}$ has eigenvalues in $\{-2,q-2,p-2,p+q-2\}$. This implies that $\Cayley(G,S)$ is an integral graph. Thus $G$ is not a CIS group.
\end{proof}

\begin{proof}[Proof of Theorem \ref{thm: CIS}.]
Suppose, towards a contradiction, that there exists a non-abelian CIS group. We pick such a CIS group $G$ of minimum order. By Lemma \ref{subgroup CIS}, every subgroup of $G$ is a CIS group. Thus, by minimality of $G$, we have that $G$ is a non-abelian group with the property that every proper subgroup of $G$ is abelian.

By Theorem \ref{thm: MM}, $G$ is either a $p$-group or there exist distinct primes $p$ and $q$ such that $|G|=p^aq^b$ for some positive integers $a$ and $b$ and the Sylow $p$-subgroup of $G$ is normal.  We consider these cases separately.

\medskip
\noindent \emph{Case I.} $G$ is a $p$-group.

\medskip

Suppose that $|G|=p^k$. Since $G$ is non-abelian, we have $k\ge3$. Let us first assume that $k=3$.
Every $p$-group has a non-trivial center. Since $G$ is not abelian, $G/Z(G)$ can not be cyclic. This implies that $|Z(G)|=p$ and $G/Z(G)\cong \ZZ_p\times\ZZ_p$.
If $G$ is a CIS group, then $\ZZ_p\times\ZZ_p$ should be a CIS group as well (Lemma \ref{subgroup CIS}). Thus by Theorem \ref{AB} we have $p=2$. Thus $G \cong Q_8$ or $D_4$, but according to Lemma \ref{bc}, $Q_8$ and $D_4$ are not CIS groups.
Thus no $p$-group of order $p^3$ is a CIS group. If $G$ is a non-abelian $p$-group of order greater than $p^3$ then $G$ has a subgroup of order $p^3$ and thus it is not a CIS group.


\medskip
\noindent \emph{Case II.} $G$ has order $p^aq^b$, where $p$ and $q$ are distinct primes, $a,b\ge 1$, and $G$ has a normal Sylow $p$-subgroup.

\medskip

In this case the Sylow $p$- and Sylow $q$-subgroups are proper and hence must be abelian by the minimality assumption on the order of $G$.  Let $P$ and $Q$ denote the Sylow $p$- and the Sylow $q$-subgroup of $G$, respectively.  Then $G\cong P\rtimes Q$.  We consider the case that $p=2$ and $p\neq 2$ separately.

\medskip
\noindent \emph{Subcase I:} $p$ is odd.

\medskip
\noindent Since $P$ is an abelian CIS group, we have $P\cong \mathbb{Z}_p$ or $P\cong \mathbb{Z}_{p^2}$.  Since $\mathbb{Z}_{p^2}$ has a characteristic subgroup of size $p$ we see that $G$ has a subgroup that is isomorphic to $\mathbb{Z}_p\rtimes Q$.  Also, $Q$ has a subgroup isomorphic to $\mathbb{Z}_q$ and since this normalizes the copy of $\mathbb{Z}_p$ we see that $G$ has a subgroup isomorphic to $\mathbb{Z}_p\rtimes \mathbb{Z}_q$.  Since there are no abelian CIS groups of order $pq$ we see that $\mathbb{Z}_p\rtimes \mathbb{Z}_q$ is non-abelian and so by minimality of $G$ we have $G\cong \mathbb{Z}_p\rtimes \mathbb{Z}_q$.  Since $G$ is non-abelian we have the result from Lemma \ref{bc}.

\medskip
\noindent \emph{Subcase II:} $p=2$.

\medskip

\noindent In this case, $|P|\in \{2,4\}$.  If $P$ is cyclic then $P$ has a characteristic subgroup isomorphic to $\mathbb{Z}_2$ and thus $G$ contains a copy of $\mathbb{Z}_2\rtimes Q$.  Notice that $\mathbb{Z}_2$ has only the trivial automorphism and so $\mathbb{Z}_2\rtimes Q\cong \mathbb{Z}_2\times Q$, which is not a CIS group since all abelian CIS groups have order a power of a prime.  Thus $P\cong \mathbb{Z}_2\times \mathbb{Z}_2$.  Notice that ${\rm Aut}(P)\cong\mathbb{Z}_3$ and so $P\rtimes Q$ is abelian unless $q=3$.  Since $G$ is non-abelian, we conclude that $q=3$ and that
$Q\cong \mathbb{Z}_3$ or $\mathbb{Z}_{9}$.  Notice that in either case, $Q$ has a subgroup of size $3$ that normalizes $P$ and so $G$ contains a subgroup isomorphic to $(\mathbb{Z}_2)^2\rtimes \mathbb{Z}_3$.  Since this group is necessarily a CIS group and since there are no abelian CIS groups of order $12$, we see that $(\mathbb{Z}_2)^2\rtimes \mathbb{Z}_3$ is a non-abelian semi-direct product and hence isomorphic to $A_4$.   But $A_4$ is not a CIS group by Lemma \ref{bc} (iii).
Thus we see that we cannot have $p=2$.

We have obtained a contradiction in each case and so we conclude that every CIS group is abelian.
\end{proof}


\section{Cayley Integral Groups}

Klotz and Sander \cite{KS} introduced the notion of a \emph{Cayley integral group}.  This is a group $G$ with the property that for every symmetric subset $S$ of $G$, $\Cayley(G,S)$ is an integral graph.  One of their results was a characterization of abelian integral groups.

\begin{thm}[Klotz and Sander \cite{KS}]
The only abelian Cayley integral groups are $$\ZZ_2^n \times \ZZ_3^m,~{\rm and}~\ZZ_2^n \times \ZZ_4^m,$$
where $m$ and $n$ are arbitrary non-negative integers.
\label{thm: KS}
\end{thm}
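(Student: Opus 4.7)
The plan is to exploit the fact that, since $G$ is abelian, all irreducible representations are one-dimensional and so by the Diaconis--Shahshahani formula (or Proposition \ref{prop:integrality criterion}) the eigenvalues of $\Cayley(G,S)$ are precisely the character sums $\chi(S)=\sum_{s\in S}\chi(s)$. I would pin down the forbidden groups by testing very small symmetric sets $\{g,g^{-1}\}$, and then verify that the surviving groups are integral by a direct character calculation.

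For the necessity direction, I would take any $g\in G$ of order $n\ge 3$ and set $S=\{g,g^{-1}\}$. The Cayley graph $\Cayley(G,S)$ is a vertex-disjoint union of $|G|/n$ copies of $C_n$, whose eigenvalues are $\{2\cos(2\pi k/n):0\le k<n\}$. Since character sums are algebraic integers, they are integral iff they are rational; Niven's theorem on rational values of cosine then forces $n\in\{1,2,3,4,6\}$. Thus every element of $G$ has order in this set. If in addition $G$ contained an element $a$ of order $4$ and an element $b$ of order $3$, their product would have order $12$ in the abelian group $G$, contradicting the previous restriction. Hence either every element order divides $4$ or every element order divides $6$, and the structure theorem for finite abelian groups then yields $G\cong \ZZ_2^n\times\ZZ_4^m$ or $G\cong\ZZ_2^n\times\ZZ_3^m$, respectively.

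For the sufficiency direction, I would write $G=A\times B$ with $A=\ZZ_2^n$ and $B\in\{\ZZ_3^m,\ZZ_4^m\}$, factor each character as $\chi=\chi_1\chi_2$, and pair each non-involution $g=(g_1,g_2)$ with its inverse:
\[
\chi(g)+\chi(g^{-1})=\chi_1(g_1)\bigl[\chi_2(g_2)+\chi_2(g_2)^{-1}\bigr].
\]
This is an integer because $\chi_1(g_1)\in\{\pm1\}$, while $\chi_2(g_2)+\chi_2(g_2)^{-1}\in\{-1,2\}$ when $B=\ZZ_3^m$ (cube roots of unity) and $\in\{-2,0,2\}$ when $B=\ZZ_4^m$ (fourth roots of unity). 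Involutions $g$ satisfy $\chi(g)\in\{\pm1\}$ directly in both cases. Summing over any symmetric $S$ yields $\chi(S)\in\ZZ$, and so $\Cayley(G,S)$ is integral by Proposition \ref{prop:integrality criterion}.

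The main substantive ingredient is Niven's theorem, which converts the innocent-looking rationality of $2\cos(2\pi/n)$ into the finite list $n\in\{1,2,3,4,6\}$; once that is in hand, both the structural restriction on $G$ and the character calculation verifying sufficiency are routine.
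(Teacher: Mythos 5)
Your proof is correct, but note that the paper contains no proof of this statement to compare against: Theorem \ref{thm: KS} is imported from Klotz and Sander \cite{KS} and used as a black box, so your argument is a genuinely self-contained derivation rather than a variant of anything in the text. Your two-step route is sound: for necessity, testing $S=\{g,g^{-1}\}$ gives a disjoint union of $n$-cycles with eigenvalues $2\cos(2\pi k/n)$, and Niven's theorem (equivalently, the fact that $\zeta_n+\zeta_n^{-1}$ has degree $\varphi(n)/2$ over $\QQ$, so integrality forces $\varphi(n)\le 2$) yields element orders in $\{1,2,3,4,6\}$; this is exactly the fact the paper records in its Remark, that the Cayley integral cyclic groups are $\ZZ_m$ with $m\in\{1,2,3,4,6\}$. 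The exclusion of simultaneous order-$3$ and order-$4$ elements via an order-$12$ product is valid in an abelian group, and the structure theorem does give the two families (the only implicit step, absorbing $\ZZ_6$ factors via $\ZZ_6\cong\ZZ_2\times\ZZ_3$, is routine). For sufficiency, your pairing $\chi(g)+\chi(g^{-1})=\chi_1(g_1)\bigl[\chi_2(g_2)+\chi_2(g_2)^{-1}\bigr]$ with $\chi_1(g_1)\in\{\pm1\}$ and the real part of a third or fourth root of unity handles all non-involutions, and $\chi(g)\in\{\pm1\}$ handles involutions, so every character sum is an integer; since for abelian $G$ the spectrum of $\Cayley(G,S)$ is exactly $\{\chi(S):\chi\in\widehat{G}\}$, integrality follows as in Proposition \ref{prop:integrality criterion}. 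By contrast, Klotz and Sander's original sufficiency argument runs through their result that $\Cayley(G,S)$ is integral whenever $S$ lies in the Boolean algebra $B(G)$ generated by subgroups of $G$, showing that in these particular groups every symmetric set decomposes into such integral sets; your direct character computation buys elementarity and brevity at no loss of rigor, while the $B(G)$ machinery buys a reusable integrality criterion valid for all abelian groups.
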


The main result of this paper is a complete characterization of Cayley integral groups, which we now state.

\begin{thm}
\label{thm: main}
The only Cayley integral groups are
$$\ZZ_2^n \times \ZZ_3^m,\ \ZZ_2^n \times \ZZ_4^m,\ Q_8\times \mathbb{Z}_2^n,\ S_3,\ {\rm and}~{\rm Dic}_{12},$$ 
where $m,n$ are arbitrary non-negative integers, $Q_8$ is the quaternion group of order $8$, and\/ ${\rm Dic}_{12}$ is the dicyclic group of order $12$.
\end{thm}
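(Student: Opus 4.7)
The plan is to parallel the inductive strategy used for Theorem~\ref{thm: CIS}: first establish that Cayley integrality is preserved under subgroups and quotients, then combine this with the Klotz--Sander classification of Theorem~\ref{thm: KS} to drastically restrict the structure of $G$, and finally carry out a case analysis. For the subgroup direction, if $H \le G$ and $S \subseteq H$ is symmetric, then $\Cayley(G,S)$ is a disjoint union of $[G:H]$ copies of $\Cayley(H,S)$, so integrality transfers from $G$ to $H$. For the quotient direction, one uses the Kronecker-product computation that already appeared in Lemma~\ref{subgroup CIS}: if $\bar S \subseteq G/N$ is symmetric with preimage $T = \bigcup_{s \in \bar S} Ns$, then the adjacency matrix of $\Cayley(G,T)$ is $J_{|N|} \otimes A_{G/N}$, forcing integrality of $\Cayley(G/N,\bar S)$ from integrality of $\Cayley(G,T)$.

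These closure properties, combined with Theorem~\ref{thm: KS}, immediately force every cyclic subgroup of $G$ to have order in $\{1,2,3,4,6\}$, since $\ZZ_{12}$ is not on the Klotz--Sander list; more generally every abelian subgroup must be isomorphic to $\ZZ_2^a \times \ZZ_3^b$ or $\ZZ_2^a \times \ZZ_4^b$. In particular, elements of order $3$ cannot commute with elements of order $4$, because their product would have order $12$. For a Sylow $2$-subgroup $P$, the constraints imply $P$ has exponent dividing $4$ and every abelian subgroup of $P$ lies in the $\ZZ_2^a \times \ZZ_4^b$ family; a direct inspection of non-abelian $2$-groups with these properties narrows $P$ down to $Q_8 \times \ZZ_2^n$. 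The Sylow $3$-subgroup is forced to be elementary abelian.

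I would then split into cases by the prime divisors of $|G|$. When $G$ is a $2$-group, the Sylow analysis above yields $G \cong \ZZ_2^n \times \ZZ_4^m$ or $G \cong Q_8 \times \ZZ_2^n$. When $G$ is a $3$-group, it must be elementary abelian, giving $\ZZ_3^m$. In the mixed case $|G| = 2^a 3^b$ with $a,b \ge 1$, the incompatibility between order-$3$ and order-$4$ elements, combined with Sylow theory and the action of the Sylow $3$-subgroup on the Sylow $2$-subgroup, reduces the possibilities to $\ZZ_2^n \times \ZZ_3^m$, $S_3$, and $\text{Dic}_{12}$; candidate groups such as $A_4$, the dihedral $D_6$, and $S_4$ must be eliminated by exhibiting explicit symmetric subsets whose Cayley graphs have non-integer eigenvalues. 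Finally, for each group on the list one verifies Cayley integrality via Proposition~\ref{prop:integrality criterion}: the integer character tables of $Q_8$, $S_3$, and $\text{Dic}_{12}$, together with the tensor structure of the representations of $Q_8 \times \ZZ_2^n$, make every $\rho(S)$ computable, and the observation that $\rho(g)+\rho(g^{-1})$ is an integer scalar multiple of the identity in every non-trivial irreducible representation of these groups reduces $\rho(S)$ to a sum of integer-spectrum operators.

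The hard part will be the mixed-prime case analysis together with the direct verification for $\text{Dic}_{12}$. Ruling out each small non-abelian group of order $2^a 3^b$ demands a tailored bad symmetric subset, so the exclusion step does not follow from one clean lemma. Confirming that $\text{Dic}_{12}$ itself is Cayley integral is equally delicate: it has elements of order $6$ and a $2$-dimensional irreducible representation whose generator matrices have genuinely complex (non-real) eigenvalues, so one must verify integrality of $\rho(S)$ for every symmetric subset and not only for generating ones. The interaction between the order-$3$ rotation component and the order-$4$ component of $\text{Dic}_{12}$ is the principal technical obstacle, and I expect the proof to hinge on a careful spectral argument for that specific representation.
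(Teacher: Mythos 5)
Your proposal has a genuine gap at its structural core: the Klotz--Sander closure conditions you invoke are vacuous beyond restricting element orders to $\{1,2,3,4,6\}$. Any abelian group of exponent dividing $4$ is automatically of the form $\ZZ_2^a\times\ZZ_4^b$, so the condition ``every abelian subgroup of the Sylow $2$-subgroup lies in that family'' says nothing more than ``exponent divides $4$,'' and your ``direct inspection'' cannot separate $Q_8\times\ZZ_2^n$ from $D_4$ (whose abelian subgroups are all $\ZZ_4$, $\ZZ_2^2$, or $\ZZ_2$) or from the extraspecial $2$-groups. Likewise every abelian subgroup of a non-abelian exponent-$3$ group such as the Heisenberg group of order $27$ is elementary abelian, so ``the Sylow $3$-subgroup is forced to be elementary abelian'' does not follow from abelian-subgroup data. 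The paper closes exactly this gap with a quantitative tool you have no substitute for, Proposition \ref{prop: bound}: if $S$ is a symmetric generating set and $\Cayley(G,S)$ is integral, then $|G|$ divides $2(2|S|-1)!$. Applied to two involutions it forces involutions in a Cayley integral $2$-group to commute (Lemma \ref{lem: 2}); applied to $S=\{x,x^{-1},y,y^{-1}\}$ it forces any two elements of order $3$ to generate a group of order dividing $9$ (Proposition \ref{lem: 3group}, and, with substantial extra work involving Sylow counting $n_3\in\{1,4,16\}$ and the $S_4$-action Corollary \ref{cor: sym}, Lemma \ref{lem: 144}); together with the explicit non-integral spectra of Lemma \ref{lem: list} for $D_4$, the non-abelian groups of orders $12$, $18$, $24$, and $Q_8\times\ZZ_4$, these drive the whole classification. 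Your mixed-prime sketch similarly provides no mechanism to bound the Sylow $3$-subgroup: $\ZZ_3^2\rtimes\ZZ_4$ with inversion action has all element orders in $\{1,2,3,4,6\}$ and no commuting order-$3$/order-$4$ pair, and is only excluded in the paper by passing to a non-abelian order-$18$ quotient inside Corollary \ref{nonnilp}, which in turn rests on Lemma \ref{lem: 144}, Lemma \ref{lem: nilp}, and Lemma \ref{lem: list}(3); ruling out $A_4$, $D_6$, and $S_4$ alone does not suffice.

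The verification step also contains a false claim: it is not true that $\rho(g)+\rho(g^{-1})$ is an integer scalar in every non-trivial irreducible representation of the listed groups. For a transposition $\tau$ in $S_3$ (or in the $S_3$-quotient representations of ${\rm Dic}_{12}$, where $y^2\mapsto I$), one has $\rho(\tau)+\rho(\tau^{-1})=2\rho(\tau)$, which is not scalar in the $2$-dimensional irreducible representation; and a sum of non-commuting operators each with integer spectrum need not have integer spectrum, which is precisely the difficulty. This is why the paper's proof of Lemma \ref{lem: list0} splits on whether $y^2\mapsto\pm I$ and, in the $S_3$ case, runs a multiset case analysis over doubled transpositions of sizes $2$, $4$, and $6$, using an inner automorphism and the identity that $(12)+(13)$ acts as $-(23)$ in the $2$-dimensional representation. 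Your scalar-reduction shortcut skips exactly the case that makes ${\rm Dic}_{12}$ and $S_3$ delicate.
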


We note that the dicyclic group of order $12$ can be described as the non-abelian semi-direct product $\mathbb{Z}_3\rtimes \mathbb{Z}_4$.  One of the interesting features is that it has $S_3$ as a homomorphic image.   We also point out that $S_3$ and ${\rm Dic}_{12}$ are the only non-nilpotent groups on the list.

Let us first describe some basic properties of Cayley integral groups.

\begin{lem}
\label{subgroup integral}
Let $G$ be a Cayley integral group.  Then every subgroup and every homomorphic image of $G$ is also Cayley integral.
\end{lem}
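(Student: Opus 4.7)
The plan is to imitate (in a simpler form) the two constructions used in the proof of Lemma \ref{subgroup CIS}, but because the hypothesis is now stronger (every Cayley graph is integral, not just those whose connection set has a non-subgroup complement), we do not have to verify any non-triviality condition on the connection sets.

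For the subgroup part, let $H \le G$ and let $S \subseteq H$ be any symmetric subset with $1 \notin S$. I would view $S$ as a symmetric subset of $G$ as well. Then $\Cayley(G,S)$ decomposes as the disjoint union of $[G:H]$ isomorphic copies of $\Cayley(H,S)$ (one per left coset of $H$ in $G$), so its spectrum is the multiset union of $[G:H]$ copies of the spectrum of $\Cayley(H,S)$. Since $G$ is Cayley integral, $\Cayley(G,S)$ is integral, hence $\Cayley(H,S)$ is integral as well. Since $S$ was an arbitrary symmetric subset of $H$ not containing $1$, the subgroup $H$ is Cayley integral.

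For the quotient part, let $N \triangleleft G$ and let $\bar S \subseteq G/N$ be symmetric with $N \notin \bar S$. Define $T = \pi^{-1}(\bar S) = \bigcup_{\bar s \in \bar S} \bar s$, where $\pi\colon G \to G/N$ is the natural surjection. Then $T$ is symmetric, $1 \notin T$, so $\Cayley(G,T)$ is an integral graph by hypothesis. Ordering the vertices of $G$ coset by coset, one checks that whether two vertices $g_1,g_2$ are adjacent depends only on the cosets $Ng_1,Ng_2$; consequently the adjacency matrix of $\Cayley(G,T)$ equals $A_{\bar S}\otimes J_k$, where $k=|N|$ and $A_{\bar S}$ is the adjacency matrix of $\Cayley(G/N,\bar S)$. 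The eigenvalues of $J_k$ are $k$ (once) and $0$ (with multiplicity $k-1$), so the nonzero eigenvalues of $A_{\bar S}\otimes J_k$ are precisely $k\lambda$ for the nonzero eigenvalues $\lambda$ of $A_{\bar S}$.

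Now comes the only conceptual step. Integrality of $\Cayley(G,T)$ forces each $k\lambda$ to be an integer, so each eigenvalue $\lambda$ of $A_{\bar S}$ is a \emph{rational} number. But eigenvalues of an integer matrix are algebraic integers, and a rational algebraic integer is an ordinary integer. Hence every eigenvalue of $\Cayley(G/N,\bar S)$ is an integer, and since $\bar S$ was arbitrary, $G/N$ is Cayley integral. There is no real obstacle in this proof; the only thing to watch out for is the rational-versus-integer argument in the quotient case, which is what lets us divide the spectrum by $k=|N|$ without losing integrality.
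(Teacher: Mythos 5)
Your proof is correct and takes essentially the same route as the paper's: the subgroup case via the decomposition of $\Cayley(G,S)$ into $[G:H]$ copies of $\Cayley(H,S)$, and the quotient case via $T=\pi^{-1}(\bar S)$, the Kronecker factorization of the adjacency matrix with $J_k$, and the observation that a rational algebraic integer is an integer. The paper phrases the last step using a single eigenvector ${\bf w}\otimes {\bf j}$ rather than the full spectrum of the Kronecker product, but this is only a cosmetic difference.
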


\begin{proof}
The claim for subgroups is obvious since for a subset $S$ of a subgroup $H\le G$, The Cayley graph $\Cayley(G,S)$ consists of $[G:H]$ copies of $\Cayley(H,S)$.

Next, suppose that $K$ is a homomorphic image of $G$.  Let $\pi:G\to K$ be a surjective homomorphism and let $S$ be a symmetric subset of $K$. Let $T=\pi^{-1}(S)$.  Then $T$ is a symmetric subset of $G$.  We let $A_G$ denote the adjacency matrix of $\Cayley(G,T)$. Then $A_G=A_H\otimes J_k$, where $A_H$ is the adjacency matrix of $\Cayley(H,S)$, $k=|G|/|K|$, and $J_k$ is the $k\times k$ matrix with every entry equal to one. If ${\bf w}$ is an eigenvector of $A_H$ corresponding to an eigenvalue $\lambda$ and if ${\bf j}$ is the $k\times 1$ matrix whose entries are all $1$, then $A_G({\bf w}\otimes {\bf j})=k\lambda ({\bf w}\otimes {\bf j})$.  Since $\lambda$ is an algebraic integer, it must indeed be an integer.  This implies that $K$ is a Cayley integral group.
\end{proof}

We now show that the property of being Cayley integral is equivalent to a weaker property.

\begin{prop}
A group $G$ is Cayley integral if and only if every connected Cayley graph of $G$ is integral.
\end{prop}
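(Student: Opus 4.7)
The forward direction is immediate, since every connected Cayley graph is a Cayley graph. The plan for the converse is as follows. Assume every connected Cayley graph of $G$ is integral and let $S$ be an arbitrary symmetric subset of $G$ with $1\notin S$. Put $H:=\langle S\rangle$. Since $\Cayley(G,S)$ is the disjoint union of $[G:H]$ isomorphic copies of $\Cayley(H,S)$, it has the same spectrum as $\Cayley(H,S)$, so it is enough to show that $\Cayley(H,S)$ is integral. If $H=G$, then $\Cayley(G,S)$ is already connected and there is nothing to prove, so the interesting case is $H\subsetneq G$.

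In that case my plan is to reuse, essentially verbatim, the construction from the proof of Lemma~\ref{subgroup CIS}. Set $T:=S\cup(G\setminus H)$. Then $T$ is symmetric, avoids the identity (since $1\in H$), and generates $G$ (since $T\supseteq G\setminus H$ and $S$ generates $H$), so $\Cayley(G,T)$ is connected and, by hypothesis, integral. Ordering vertices by cosets of $H$, the adjacency matrix of $\Cayley(G,T)$ decomposes as
$$B \;=\; A_S\otimes I_k \,+\, J_n\otimes(J_k-I_k),$$
where $A_S$ is the adjacency matrix of $\Cayley(H,S)$, $n=|H|$, and $k=[G:H]$. The eigenvalue computation already carried out in Lemma~\ref{subgroup CIS} shows that every eigenvalue of $A_S$ other than $|S|$ occurs among the eigenvalues of $B$, while the remaining eigenvalues of $B$ are the (automatically integral) values $|S|+|G|-|H|$ and $|S|-|H|$. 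Hence integrality of $B$ forces integrality of $A_S$, and $\Cayley(H,S)$ is integral.

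The only real obstacle is recognising that the construction and eigenvalue calculation used in Lemma~\ref{subgroup CIS} apply here without any change; once that observation is made, no new computation is required, and the proof amounts to a one-paragraph reduction from arbitrary symmetric sets to generating ones.
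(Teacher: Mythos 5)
Your proposal is correct, but it takes a genuinely different route from the paper. The paper's proof of the nontrivial direction is by complementation: if $\Cayley(G,S)$ were not integral, it must be disconnected (a connected counterexample would contradict the hypothesis outright), so its complement, namely $\Cayley(G,T)$ with $T=G\setminus(S\cup\{1\})$, is connected and hence integral by hypothesis; since the complement of a regular integral graph is integral, $\Cayley(G,S)$ is integral after all, a contradiction. You instead reduce to $H=\langle S\rangle$ and join the $[G:H]$ coset copies of $\Cayley(H,S)$ by taking $T=S\cup(G\setminus H)$, recycling the decomposition $B=A_S\otimes I_k+J_n\otimes(J_k-I_k)$ and the eigenvector computation from Lemma~\ref{subgroup CIS}; that computation does show the spectrum of $B$ consists of the eigenvalues $\lambda_i$ ($i\ge 2$) of $A_S$ together with $|S|+|G|-|H|$ and $|S|-|H|$, so integrality of the connected graph $\Cayley(G,T)$ pulls back to $\Cayley(H,S)$ and hence to $\Cayley(G,S)$. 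Both arguments hinge on attaching to an arbitrary symmetric $S$ a single connected Cayley graph whose integrality controls that of $\Cayley(G,S)$: the paper's complement trick is a three-line argument leaning on a standard fact about complements of regular graphs, while yours is longer but self-contained given Lemma~\ref{subgroup CIS} (already proved in the paper) and exhibits the auxiliary spectrum explicitly. One cosmetic point: $\Cayley(G,S)$ and $\Cayley(H,S)$ have the same \emph{set} of eigenvalues rather than the same spectrum as a multiset (multiplicities scale by $[G:H]$), but the set is all that integrality requires.
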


\begin{proof}
One direction is obvious.
Suppose now that every connected Cayley graph of $G$ is integral, but there is a subset $S$ of $G$ such that $\Cayley(G,S)$ is not integral. Let $T = G\setminus(S\cup\{1\})$. Note that $\Cayley(G,S)$ is disconnected, thus its complementary graph, which is equal to  $\Cayley(G,T)$ is connected. Thus, by the assumption, $\Cayley(G,T)$ is integral. It is well-known that the complement of a regular integral graph is also integral. This contradicts our assumption that $\Cayley(G,S)$ has non-integral eigenvalues.
\end{proof}

We note that the only Cayley integral cyclic groups are $\mathbb{Z}_m$ with $m\in \{1,2,3,4,6\}$.  Thus if $G$ is a Cayley integral group, then since subgroups of $G$ are also Cayley integral, $G$ can not have any elements of order $p$ where $p$ is a prime greater than 3.  In particular, Cauchy's theorem gives that $G$ is a $(2,3)$-group; i.e., the order of $G$ is the product of a power of $2$ and a power of $3$.  A theorem of Burnside then gives that $G$ is necessarily a solvable group.

We summarize the important points obtained so far in the following remark.
\begin{rem}
Let $G$ be a group.  Then:
\begin{enumerate}
\item $G$ is a Cayley integral group if and only if $\Cayley(G,S)$ is an integral graph for every symmetric generating sets $S$ of $G$;
\item if $G$ is a Cayley integral group then so are subgroups and homomorphic images of $G$;
\item if $G$ is a Cayley integral group then its order is a product of a power of $2$ and a power of $3$ and all elements of $G$ have order in $\{1,2,3,4,6\}$;
\item $G$ is a solvable group.
\end{enumerate}
\end{rem}
We will make use of this remark often without referring to it directly.

We next give a result that will be used to characterize Cayley integral groups.  It shows, roughly speaking, that if a group $G$ has a symmetric generating set $S$ such that $\Cayley(G,S)$ is an integral graph, then $|G|$ cannot be too large compared to $|S|$.

\begin{prop}
\label{prop: bound}
Let $G$ be a finite group and let $S$ be a symmetric generating set of $G$.
If $\Cayley(G,S)$ is an integral Cayley graph, then the order of $G$ divides $2(2|S|-1)!$.
If, in addition, $G$ is perfect or $S$ has an element of odd order, then $|G|$ divides $(2|S|-1)!$.
\end{prop}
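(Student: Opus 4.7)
The plan is to extract the bound from a spectral decomposition of the adjacency matrix $A$ of $\Gamma=\Cayley(G,S)$. Since $S$ generates $G$, the graph $\Gamma$ is connected, so $|S|$ is a simple eigenvalue of $A$ with eigenvector the all-ones vector $\mathbf{j}$, and by integrality the spectrum $\Lambda$ of $A$ lies in the integer interval $\{-|S|,\ldots,|S|\}$. The central object is the annihilating polynomial
$$Q(x)\;=\;\prod_{\mu\in\Lambda\setminus\{|S|,\,-|S|\}}(x-\mu)\;\in\;\ZZ[x],$$
so that the spectral theorem yields $Q(A)=Q(|S|)E_{|S|}+Q(-|S|)E_{-|S|}$, where $E_\lambda$ is the orthogonal projection onto the $\lambda$-eigenspace. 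Here $E_{|S|}=J/|G|$ with $J$ the all-ones matrix, and $E_{-|S|}=\mathbf{v}\mathbf{v}^{T}/|G|$ for the $\pm 1$ bipartition vector $\mathbf{v}$ when $\Gamma$ is bipartite (else $E_{-|S|}=0$); crucially, $Q(A)$ is an integer matrix.

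The non-bipartite case is then immediate: $Q(A)=(Q(|S|)/|G|)J$, so integrality of the entries forces $|G|\mid Q(|S|)$; but $Q(|S|)$ is a product of distinct positive integers drawn from $\{1,2,\ldots,2|S|-1\}$ and hence divides $(2|S|-1)!$. In the bipartite case the spectrum is symmetric about $0$, and the pairing $(|S|-\mu)(|S|+\mu)=(-|S|-\mu)(-|S|+\mu)=|S|^{2}-\mu^{2}$ of the factors gives $Q(-|S|)=\epsilon Q(|S|)$ with $\epsilon=+1$ if $0\notin\Lambda$ and $\epsilon=-1$ if $0\in\Lambda$ (the flipped sign coming from the unpaired factor at $\mu=0$). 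In either subcase the matrix $J+\epsilon\mathbf{v}\mathbf{v}^{T}$ has all entries in $\{0,2\}$, so integrality of
$$Q(A)\;=\;\frac{Q(|S|)}{|G|}\bigl(J+\epsilon\mathbf{v}\mathbf{v}^{T}\bigr)$$
forces $|G|\mid 2Q(|S|)$, and hence $|G|\mid 2(2|S|-1)!$.

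For the improved bound, I would invoke the standard fact that a connected Cayley graph $\Cayley(G,S)$ is bipartite precisely when $G$ admits a subgroup of index $2$ disjoint from $S$: the parity of the length of an $S$-word representing a group element descends to a well-defined surjective homomorphism $G\to\ZZ/2\ZZ$. Consequently, if $G$ is perfect then the absence of index-$2$ subgroups makes $\Gamma$ non-bipartite; and if some $s\in S$ has odd order $n$, then the closed walk $e\to s\to s^{2}\to\cdots\to s^{n}=e$ has odd length, again ruling out bipartiteness. In either situation the stronger bound $|G|\mid(2|S|-1)!$ applies.

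The main care point is the bipartite step: one must correctly identify the sign $\epsilon$ by tracking how the unpaired $\mu=0$ factor flips the symmetric pairing, and then verify that for this sign the matrix $J+\epsilon\mathbf{v}\mathbf{v}^{T}$ really is $\{0,2\}$-valued (and not merely $\{0,\pm 2\}$-valued), so that divisibility by $|G|$ of its product with $Q(|S|)/|G|$ translates cleanly into $|G|\mid 2Q(|S|)$. Everything else is routine spectral and integrality bookkeeping.
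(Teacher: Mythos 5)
Your proof is correct and follows essentially the same route as the paper: both apply an integer annihilating polynomial to the adjacency matrix, use the Perron--Frobenius simplicity of the eigenvalues $\pm|S|$ of the connected graph so that the resulting matrix is supported on the span of $\mathbf{j}$ and the bipartition vector, extract the divisibility from entrywise integrality, and prove the strengthened bound by the identical bipartiteness analysis (an odd closed walk from an odd-order generator; an index-$2$ kernel containing $G'$ ruling out the perfect case). The only cosmetic differences are that the paper works with the fixed polynomial $\Phi(x)=\prod_{i=-|S|+1}^{|S|-1}(x-i)$, for which $\Phi(|S|)=(2|S|-1)!$ and $\Phi(-|S|)=-(2|S|-1)!$ hold on the nose so no sign bookkeeping at $\mu=0$ is needed, and it reads off the divisibility from a nonzero column of $\Phi(A)$ paired against $\mathbf{j}$ and the bipartition vector rather than from your explicit spectral projectors $J/|G|$ and $\mathbf{v}\mathbf{v}^{T}/|G|$.
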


\begin{proof}
Let $A_S$ denote the adjacency matrix of $\Cayley(G,S)$.  For each group element $g \in G$, we let $A_g$ denote the permutation matrix (associated with the left-regular representation of $G$) of $g$.  We then have that $A_S = \sum_{s \in S} A_s$.  Let $k=|S|$. Since $S$ is a symmetric generating subset of $G$, $\Cayley(G,S)$ is a $k$-regular connected graph. Therefore all eigenvalues of $A_S$ are in the set $\{-k, \ldots, k-1,k\}$.
A well-known consequence of the Perron-Frobenius Theorem is that the eigenspaces of the eigenvalues $k$ and $-k$ are at most $1$-dimensional since the graph is connected.  Moreover, $-k$ is an eigenvalue if and only if the graph is bipartite.
We now look at the cases corresponding to whether $\Cayley(G,S)$ is bipartite or not.

\medskip
\noindent \emph{Case I.} $\Cayley(G,S)$ is not bipartite.

\medskip

\noindent In this case $-k$ is not an eigenvalue of $A_S$. Since $A_S$ is a symmetric matrix, it is diagonalizable and therefore the minimal polynomial of $A_S$ divides
$$(x-k) \prod_{i=-k+1}^{k-1}(x-i).$$
If we take $\Phi (x)=\prod_{i=-k+1}^{k-1}(x-i)$, then $B:=\Phi(A_S)$ will be nonzero, since $A_S$ has $k$ as an eigenvalue.  Let ${\bf j}$ be the vector whose coordinates are all equal to one.  This spans the kernel of $A_S - kI$.
Since $B$ is nonzero, there is some $i$ such that $B{\bf e}_i$ is nonzero, where ${\bf e}_i$ is the vector with a one as its $i$-th coordinate and zeros in every other coordinate.  Moreover, $(A_S - kI)B=0$ and so $B{\bf e}_i=c\,{\bf j}$ for some $c\in \mathbb{Z}\setminus\{0\}$.
Then ${\bf j}^T B = {\bf j}^T \prod_{i=-k+1}^{k-1} (A_S-iI) = (2k-1)!\,{\bf j}^T$.  Hence,
$$(2k-1)!=(2k-1)!\,{\bf j}^T\cdot {\bf e}_i = {\bf j}^T B{\bf e}_i = c\,{\bf j}^T \cdot {\bf j} = c |G|.$$
It follows that $|G|$ divides $(2k-1)!$ in this case.  Notice that this case necessarily occurs if $S$ contains an element of odd order. It also occurs when $G$ is perfect. To see this, note that $\Cayley(G,S)$ being bipartite implies that there is a homomorphism $\phi$ from $G$ to $\ZZ_2$ which sends each element in $S$ (and all elements in the bipartite class containing $S$) to 1. The kernel of $\phi$ must contain $G'$ since the image is abelian, and so if $G$ is perfect then $\phi$ would need to be trivial.

\medskip
\noindent \emph{Case II.} $\Cayley(G,S)$ is bipartite.

\medskip
\noindent We let ${\bf u}$ be a nonzero integer vector with $A_S{\bf u}=-k{\bf u}$.  We can take ${\bf u}$ to be the vector whose coordinates are all in $\{\pm 1\}$, where we have a $1$ in the $g$-th coordinate if and only if $g$ is in the kernel of the homomorphism from $G$ to $\ZZ_2$ that sends each element of $S$ to $1$.

As before we let $B=\Phi(A_S)$, where $\Phi$ is the polynomial described in Case I.  Then
$(A-kI)(A+kI)B=0$ and so the range of $B$ is contained in the span of ${\bf j}$ and ${\bf u}$.
Moreover, $B$ is nonzero since $k$ and $-k$ occur as eigenvalues of $|A|$.  Thus there is some $i$ such that $B{\bf e}_i=c\,{\bf j} + d\,{\bf u}$ for some $c,d\in \mathbb{Q}$, not both zero, with $c\,{\bf j}+d\,{\bf u}$ a vector with integer coordinates.  Notice that this implies that $c+d$ and $c-d$ are integers.

Since $A_S$ is Hermitian and ${\bf u}$ and ${\bf j}$ are eigenvectors from distinct eigenspaces, we see that ${\bf u}$ and ${\bf j}$ are orthogonal.  As before, we have
$${\bf j}^T B = \Phi(k)\,{\bf j}^T = (2k-1)!\,{\bf j}^T~\qquad ~{\rm ~and}\qquad {\bf u}^T B = \Phi(-k){\bf u}^T = -(2k-1)!\,{\bf u}.$$
Thus
\begin{equation}
\label{eq:cd1}
  (2k-1)!=(2k-1)!\,{\bf j}^T\cdot {\bf e}_i={\bf j}^T B{\bf e}_i = c\,{\bf j}^T\cdot {\bf j} = c|G|
\end{equation}
and 
\begin{equation}
\label{eq:cd2}
-(2k-1)!=-(2k-1)!{\bf u}^T \cdot {\bf e}_i = d{\bf u}^T\cdot {\bf u} = d|G|.
\end{equation}
By summing up (\ref{eq:cd1}) and (\ref{eq:cd2}), we see that $(c+d)|G|=0$, thus $d=-c$. By taking the difference, we obtain $2c|G| = 2(2k-1)!$. Since $2c=c-d$ is an integer, we conclude that $|G|$ divides $2(2k-1)!$. 
\end{proof}

We now classify all Cayley integral groups.  During the course of giving our classification, it will be useful to understand whether some groups of small order are Cayley integral or not.

\begin{lem}
\label{lem: list0}
The following groups are Cayley integral groups:
\begin{enumerate}
\item[\rm (a)] $S_3$,
\item[\rm (b)] the dicyclic group ${\rm Dic}_{12}$ (the non-trivial semi-direct  product $\mathbb{Z}_3\rtimes \mathbb{Z}_4$),
\item[\rm (c)] $Q_8\times \mathbb{Z}_2^d$ for every $d\ge 0$.
\end{enumerate}
\end{lem}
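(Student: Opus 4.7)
By the integrality criterion (Proposition \ref{prop:integrality criterion}), it suffices to show that $\rho(S) = \sum_{s \in S}\rho(s)$ has integer eigenvalues for every irreducible representation $\rho$ of $G$ and every symmetric $S \subseteq G$ with $1 \notin S$. The linear characters of each of the three groups take values in fourth roots of unity, and since $S$ is symmetric, the contribution of each inverse-pair to $\chi(S)$ is an integer: $\chi(s) + \chi(s^{-1}) = 2\,\mathrm{Re}\,\chi(s) \in \mathbb{Z}$ for non-involutions, and $\chi(s) \in \{\pm 1\}$ for involutions. Hence only the $2$-dimensional irreducible representations need to be analyzed, which I will do separately for each of (a), (b), (c).

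\textbf{For $S_3$.} The unique $2$-dimensional irreducible $\rho$ sends each central class sum to a scalar matrix. The $3$-cycle class gives $\rho((123)) + \rho((132)) = -I$ (trace $-2$), while the transposition class gives $\rho((12)) + \rho((13)) + \rho((23)) = 0$ (each reflection has trace $0$). For any symmetric $S$, the two $3$-cycles lie in $S$ either together or not at all, contributing $-I$ or $0$; the transpositions contribute a sum of zero, one, two, or three reflections, which by the above relation equals $0$, $\pm\rho(\tau)$ for some transposition $\tau$, or $0$. All such matrices have eigenvalues in $\{0, \pm 1\}$, so $\rho(S)$ has integer eigenvalues.

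\textbf{For $Q_8 \times \mathbb{Z}_2^d$.} Every irreducible representation has the form $\rho \otimes \chi$, with $\rho$ irreducible of $Q_8$ and $\chi$ a linear character of $\mathbb{Z}_2^d$; only the $2$-dimensional $\rho$ requires attention. The key observation is that for $q \in \{\pm i,\pm j,\pm k\}$ we have $q^{-1} = -q = (-1)\cdot q$, and since $\rho(-1) = -I$, this gives $\rho(q) + \rho(q^{-1}) = 0$. Consequently, in $(\rho\otimes\chi)(S) = \sum_{(q,z)\in S}\chi(z)\rho(q)$, the inverse-paired contributions $(q,z),(q^{-1},z)$ with $q$ of order $4$ cancel entirely, while only the $(\pm 1, z)\in S$ contribute scalar multiples of $I$. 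Therefore $(\rho \otimes \chi)(S)$ is a scalar matrix with integer entries.

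\textbf{For $\mathrm{Dic}_{12} = \langle a,b \mid a^6=1,\ b^2=a^3,\ bab^{-1}=a^{-1}\rangle$.} The center is $Z = \{1, a^3\}$, and there are two $2$-dimensional irreducibles $\rho_1,\rho_2$ distinguished by $\rho_i(a^3) = (-1)^i I$. For $\rho_1$, the identity $(a^kb)^{-1} = a^{k+3}b = a^3\cdot a^kb$ combined with $\rho_1(a^3) = -I$ yields exactly the pair-cancellation of part (c), eliminating all order-$4$ contributions; the remaining powers of $a$ are simultaneously diagonal in any eigenbasis of $\rho_1(a)$ and sum to a scalar matrix. For $\rho_2$, which factors through $G/Z \cong S_3$, I would project $S$ to $S_3$ and track multiplicities: symmetry of $S$ together with $(a^kb)^{-1} = a^{k+3}b$ forces $|S \cap \{a^kb, a^{k+3}b\}| \in \{0,2\}$, and similarly $|S \cap \{a,a^5\}|,\ |S \cap \{a^2,a^4\}| \in \{0,2\}$. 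Thus $\rho_2(S) = c\,I + 2\,\bar\rho(T)$ for some integer $c$ and a symmetric $T \subseteq S_3$, and integrality follows from part (a). The main obstacle is precisely this multiplicity bookkeeping for $\rho_2$; once it is done, the pair-cancellation observation unifies every remaining step.
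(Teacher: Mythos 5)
Your proof is correct and takes essentially the same route as the paper's: everything hinges on the central involution being sent to $\pm I$, with order-$4$ inverse pairs cancelling when it is sent to $-I$ (the paper's $u+u^{-1}=u(1+z)\mapsto 0$ in part (c), and $y+y^3\mapsto 0$ etc.\ in part (b)), and the remaining case reducing to a multiplicity-two multiset computation in $S_3$ via centrality of class sums and integer-valued characters. The only differences are organizational --- you prove (a) directly and feed it into (b), whereas the paper proves (b) first and obtains (a) as a homomorphic image --- and your one terse spot, that $\rho_1(a^k)+\rho_1(a^{-k})$ is scalar, is supplied in one line by noting that $a^k+a^{-k}$ is central in the group algebra (conjugation by $b$ swaps the two terms) and invoking Schur's lemma, exactly the paper's argument for $x+x^2$.
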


\begin{proof}
Notice that (a) follows from (b) since ${\rm Dic}_{12}$ has $S_3$ as homomorphic image.  We note that ${\rm Dic}_{12}$ has $\langle x,y~|~x^3=y^4=1,~yxy^{-1}=x^{-1}\rangle$ as a presentation.
Any symmetric subset $S$ of ${\rm Dic}_{12}$ is a union of sets from
$\{1\}$, $\{x,x^2\}$, $\{y,y^3\}$, $\{y^2\}$, $\{xy,xy^3\}$, $\{x^2y,x^2y^3\}$, and $\{xy^2,x^2y^2\}$.
Moreover, $y^2$ is central and hence gets mapped to either the identity or to the negative of the identity by any irreducible representation.  We consider these cases separately.
If $y^2$ is sent to $-I$ then each of $y+y^3$, $xy+xy^3$, and $x^2y+x^2y^3$ is sent to zero; and each of $xy^2+x^2y^2$, $y^2$, $x+x^2$, and $1$ is sent to an integer scalar matrix.  Thus each symmetric set $S$ has the property that the corresponding element of the group algebra is sent to an integer scalar multiple of the identity and hence has integer eigenvalues.  If, on the other hand, $y^2$ is sent to $I$ then our representation factors through ${\rm Dic}_{12}/\langle y^2\rangle \cong S_3$.  Notice that if we let $\pi$ denote the isomorphism from ${\rm Dic}_{12}/\langle y^2\rangle $ to $S_3$, in which the image of $x$ is sent to $(123)$ and the image of $y$ is sent to $(12)$, then we see that the symmetric set $S$ becomes a multi-set in which we have at most two copies of $\{{\rm id}\}$, at most three copies of $\{(123),(132)\}$, and either zero or two copies of each of $\{(12)\}$, $\{(13)\}$, and $\{(23)\}$.

Both ${\rm id}$ and $(123)+(132)$ are central in the group algebra and since the characters of $S_3$ are integer-valued we see that these elements are sent to integer multiples of the identity in any irreducible representation of $S_3$.  Thus these sets have no affect on whether we obtain a matrix with integer eigenvalues.  Thus we may assume that the multi-set is a union consisting of either $0$ or $2$ copies of each of $\{(12)\}$, $\{(13)\}$, $\{(23)\}$.  Notice that these elements each have order $2$ and so if the multi-set has size $2$ (i.e., we have two copies of a single transposition) then we obtain a matrix with eigenvalues in $\{\pm 2\}$. Next, observe that $(12)+(13)+(23)$ is central and since the characters of $S_3$ are integer-valued, we see that if $S$ has size $6$ then we again obtain a matrix with integer eigenvalues.  Finally, if our multi-set has size $4$ then by applying an inner automorphism we may assume that it is given by $\{(12),(12),(13),(13)\}$.  Then $(12)+(13)$ maps to $2$ under the trivial representation; to $-2$ under the alternating representation; and has the same image as $-(23)$ in the irreducible $2$-dimensional representation of $S_3$.  Thus we see that in each case we obtain a matrix with integer eigenvalues.  This establishes (a) and (b).

To show (c), let $G=Q_8 \times \mathbb{Z}_2^d$ and let $z$ be the central element of order $2$ in $Q_8$. If $\phi$ is an irreducible representation of $G$ then $z$ must either be sent to the identity or to the negative of the identity.  If $z$ is sent to the identity then $\phi$ in fact factors through $G/\langle z\rangle$, which is an elementary abelian $2$-group and thus $\phi$ is one-dimensional and clearly any symmetric set will be sent to an integer.  If, on the other hand, $\phi(z)=-I$, then notice that if $u$ is an element of order $4$ then $u^2=z$ and so the natural extension of $\phi$ to the group algebra of $G$ sends $u+u^{-1}=u(1+z)$ to $0$.  Consequently, we only need to consider symmetric sets consisting of elements of order $2$.  But all elements of order $2$ are central in $G$ and hence are mapped to either $I$ or $-I$ by $\phi$.  It follows that ${\rm Cay}(G,S)$ is an integer Cayley graph for each symmetric subset $S$ of $G$, giving (c).
\end{proof}

\begin{lem}
\label{lem: list}
The following groups are not Cayley integral groups:
\begin{enumerate}
\item[\rm (1)] any dihedral group $D_n$ with $n\ge 4$;
\item[\rm (2)] any non-abelian group of order $12$ that is not isomorphic to ${\rm Dic}_{12}$;
\item[\rm (3)] any non-abelian group of order $18$;
\item[\rm (4)] any non-abelian group of order $24$;
\item[\rm (5)] $Q_8\times \mathbb{Z}_4$.
\end{enumerate}
\end{lem}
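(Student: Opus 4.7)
Each group $G$ on the list will be shown not to be Cayley integral by one of three mechanisms: (i) locating an element of $G$ whose order lies outside $\{1,2,3,4,6\}$, which is forbidden by the Remark preceding this Lemma; (ii) locating a subgroup or homomorphic image of $G$ already known not to be Cayley integral, via Lemma~\ref{subgroup integral}; or (iii) exhibiting an explicit symmetric set $S\subseteq G$ and an irreducible representation $\rho$ of $G$ such that $\rho(S)$ has a non-integer eigenvalue, invoking Proposition~\ref{prop:integrality criterion}. The bulk of the argument will reduce each listed group to a small collection of base cases that I treat by method (iii).

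For (1), take $S=\{y,xy\}\subseteq D_n$. In the faithful two-dimensional irreducible representation of $D_n$, $\rho(S)$ has trace $0$ and determinant $-4\cos^2(\pi/n)$, so its eigenvalues are $\pm 2\cos(\pi/n)$. Because $2\cos(\pi/n)$ is an algebraic integer strictly between $0$ and $2$ for $n\ge 3$, its only rational value is $1$, which forces $n=3$; hence for $n\ge 4$ the eigenvalues are irrational. This also settles $D_6$ in case (2), and for the remaining group $A_4$ I propose $S=\{(123),(132),(12)(34),(13)(24)\}$. A direct computation in the three-dimensional standard irreducible representation of $A_4$ (for instance using the realization where the three non-identity elements of $V_4$ act by the three diagonal sign changes) shows that $\rho(S)$ has characteristic polynomial $(x+1)(x^2+x-4)$, whose roots include $(-1\pm\sqrt{17})/2$.

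For (3), $D_9$ contains an element of order $9$ and is excluded by (i). For $S_3\times\mathbb{Z}_3$ I propose $S=\{((12),1),((12),2),((13),0)\}$; in the two-dimensional irreducible representation of $S_3$ tensored with the trivial character of $\mathbb{Z}_3$, $\rho(S)=2\rho_{S_3}((12))+\rho_{S_3}((13))$ has characteristic polynomial $\lambda^2-3$. For the generalized dihedral group $(\mathbb{Z}_3\times\mathbb{Z}_3)\rtimes\mathbb{Z}_2$, with $a,b$ independent generators of the normal $\mathbb{Z}_3^2$ and $t$ the involution acting by inversion, I propose $S=\{t,at,bt\}$; in the two-dimensional irreducible representation induced from the character $\chi$ with $\chi(a)=\chi(b)=\omega$ (a primitive cube root of unity), $\rho(S)$ is anti-diagonal with off-diagonal entries whose product $(1+2\omega)(1+2\omega^2)=3$, producing eigenvalues $\pm\sqrt{3}$.

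For (4), most non-abelian groups of order $24$ succumb to (i): $\mathbb{Z}_3\rtimes\mathbb{Z}_8$ has an order-$8$ element, while $D_{12}$, $\mathrm{Dic}_{24}$, $\mathbb{Z}_3\times D_4$, $\mathbb{Z}_3\times Q_8$, and $S_3\times\mathbb{Z}_4$ each have an order-$12$ element. The remaining cases all reduce via (ii) to previously treated groups: $S_4$ and $\mathbb{Z}_3\rtimes D_4$ contain $D_4$; $A_4\times\mathbb{Z}_2$ contains $A_4$; $\mathrm{SL}_2(\mathbb{F}_3)$ has $A_4$ as its quotient by the center $\{\pm I\}$; and both $S_3\times\mathbb{Z}_2\times\mathbb{Z}_2$ and $\mathrm{Dic}_{12}\times\mathbb{Z}_2$ admit $D_6\cong S_3\times\mathbb{Z}_2$ as a homomorphic image. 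Finally, for (5) I take $S=\{(i,1),(-i,3),(j,1),(-j,3)\}\subseteq Q_8\times\mathbb{Z}_4$; in the tensor product of the two-dimensional irreducible representation of $Q_8$ with the character of $\mathbb{Z}_4$ sending its additive generator to $\sqrt{-1}$, $\rho(S)$ simplifies to $2\sqrt{-1}\,(\rho(i)+\rho(j))$, a $2\times 2$ matrix of trace $0$ and determinant $-8$, giving eigenvalues $\pm 2\sqrt{2}$. The main obstacle I expect to face is the enumerative bookkeeping in (4): ensuring that the twelve isomorphism classes of non-abelian groups of order $24$ are each accounted for by the listed mechanisms, which requires a systematic case analysis via the small groups classification or direct Sylow analysis.
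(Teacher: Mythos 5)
Your proposal is correct and takes essentially the same approach as the paper: the same three mechanisms (forbidden element orders, closure under subgroups and quotients via Lemma~\ref{subgroup integral}, and explicit symmetric sets whose image under a suitable representation has irrational eigenvalues), with the same or conjugate sets in cases (2) and (5) and the same eigenvalues $\pm\sqrt{2}$, $\pm\sqrt{3}$, $(-1\pm\sqrt{17})/2$, $\pm 2\sqrt{2}$ throughout; I checked your computations (e.g.\ the characteristic polynomial $(x+1)(x^2+x-4)$ for $A_4$ and $(1+2\omega)(1+2\omega^2)=3$ for the generalized dihedral group of order $18$) and they are right. The deviations are cosmetic: a uniform $\pm 2\cos(\pi/n)$ Niven-type argument for all $n\ge 4$ where the paper splits $n\in\{4,6\}$ from the element-order cases, the $3$-dimensional standard representation of $A_4$ instead of the $4$-dimensional permutation representation, a trivial rather than nontrivial $\mathbb{Z}_3$-character for $S_3\times\mathbb{Z}_3$, and an explicit enumeration of all twelve non-abelian groups of order $24$ where the paper compresses the bookkeeping to the two surviving cases ${\rm SL}_2(\mathbb{Z}_3)$ and ${\rm Dic}_{12}\times\mathbb{Z}_2$.
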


\begin{proof}
We first show (1).  If $n\ge 4$ and $n\not \in \{4,6\}$ then $D_n$ contains an element that is of order $r\notin \{1,2,3,4,6\}$ and thus $D_n$ is not Cayley integral (since the subgroup isomorphic to $\ZZ_r$ is not).  Thus we only need to worry about $n\in \{4,6\}$.   Notice that $D_n$ has the presentation $\langle x,y~|~x^2=y^n=1, xyx=y^{-1}\rangle$.  We have a $2$-dimensional representation $\theta$ of $D_n$ given by
\[ x \mapsto \left( \begin{array}{cc} 0 & 1  \\ 1 & 0  \end{array}\right),~~~y \mapsto \left( \begin{array}{cc} \omega_n & 0 \\ 0 & \omega_n^{-1} \end{array}\right),\] where $\omega_n$ is the primitive $n$-th root of unity.
Then if we use the symmetric generating set $S=\{x,xy\}$ we see that $\theta(x)+\theta(xy)$ is given by
\[  \left( \begin{array}{cc}0& 1+\omega_n  \\ 1+\omega_n^{-1} & 0  \end{array}\right),\] which has eigenvalues
$\pm \sqrt{2+\omega_n+\omega_n^{-1}}$.  We note that if $n=4$ then this gives eigenvalues $\pm\sqrt{2}$ and if $n=6$ this gives eigenvalues $\pm \sqrt{3}$.  Thus we have (1).

We now consider (2).   Notice that the only non-abelian groups of order $12$ are, up to isomorphism, ${\rm Dic}_{12}$, $A_4$, and $D_6$.  By (1), we only need to consider $A_4$.  For $A_4$ notice that if we use the $4$-dimensional representation $\rho$ which associates to a permutation in $A_4$ its corresponding permutation matrix and if we use the symmetric set
$S=\{(13)(24),(14)(23),(123),(132)\}$, then by extending $\rho$ to the group algebra of $A_4$ via linearity, we see that
$(13)(24)+(14)(23)+(123)+(132)$ is represented by the matrix
\[  \left( \begin{array}{rrrr} 0 & 1 & 2 & 1 \\ 1 &0 & 2 & 1\\ 2 & 2 & 0 & 0 \\  1 & 1 &0  &  2 \end{array}\right),\] which has eigenvalues
$4, -1, \frac{-1\pm \sqrt{17}}{2}$.  Thus $A_4$ is not Cayley integral.

To prove (3), we note that up to isomorphism there are only three non-abelian groups of order $18$: $D_9$, $S_3\times \mathbb{Z}_3$, and the group $E_9\cong \mathbb{Z}_3^2\rtimes_{\theta} \mathbb{Z}_2$, where $\theta$ is the map that sends every element of $\mathbb{Z}_3^2$ to its inverse.  The group $D_9$ is not Cayley integral by (1).  For $S_3\times \langle x~|~x^3=1\rangle$, we take the representation that sends
$(\sigma, x^j)\mapsto \omega^j P(\sigma),$ where $\omega$ is the primitive third-root of unity and $P$ is the (reducible) $3$-dimensional representation of $S_3$ that associates to $\sigma\in S_3$ the $3\times 3$ permutation matrix $P(\sigma)$ of $\sigma$.  If we extend this to the group algebra via linearity, then the symmetric element
$((12),x)+((12),x^2)+((13),1)$ is represented by the matrix
\[  \left( \begin{array}{rrr} 0 & -1 & 1 \\ -1 &1 & 0 \\ 1 & 0 & -1 \end{array}\right),\]
which has eigenvalues $0$, $\pm \sqrt{3}$.  Thus $S_3\times \mathbb{Z}_3$ is not Cayley integral.
The group $E_9$ has presentation $\langle x,y~|~x^3=y^3=[x,y]=1\rangle \rtimes \langle z ~|~z^2=1\rangle$, where the automorphism of $\langle x,y\rangle$ determining the semidirect product is $x\mapsto x^{-1}$, $y\mapsto y^{-1}$.  Notice that $xz,yz$, and $z$ all have order $2$.  Thus we may consider the symmetric set $S=\{xz,z,yz\}$.  We claim that the element $xz+z+yz$ in the group algebra has some representation with eigenvalues that are not all integers.  To see this, observe that $\langle x\rangle$ is a normal subgroup of $E_9$ and when we mod out by this group we have a group isomorphic to $S_3$ with isomorphism given by $\bar{y}\mapsto (123)$, $\bar{z}\mapsto (12)$.  Then the image of $xz+z+yz$ in the group algebra of $S_3$ under the composition of maps described above is $2(12)+(13)$.  Notice that the $3$-dimensional permutation representation of $S_3$ sends this element to
\[\left( \begin{array}{ccc} 0 & 2 & 1 \\ 2 & 1 & 0 \\ 1 & 0 & 2 \end{array}\right),\] which has eigenvalues $\{3,\pm \sqrt{3}\}$.  Notice that this representation lifts to a representation of $E_9$ and thus we see that $E_9$ is not Cayley integral.

To prove (4), we note that up to isomorphism there are $15$ groups of order $24$,  $3$ of which are abelian.  Of the remaining $12$ there are only two that do not have any elements of order $8$ or $12$, do not contain a copy of $D_4$, and do not contain a copy of a non-abelian group of order $12$ that is not isomorphic to ${\rm Dic}_{12}$. (These are necessary properties to be Cayley integral by (1) and (2).)  These two groups are ${\rm SL}_2(\mathbb{Z}_3)$ and ${\rm Dic}_{12}\times \mathbb{Z}_2$, up to isomorphism.  Notice that $S_3$ is a homomorphic image of ${\rm Dic}_{12}$ and so $S_3\times \mathbb{Z}_2$ is a homomorphic image of ${\rm Dic}_{12}\times \mathbb{Z}_2$.  But $S_3\times \mathbb{Z}_2$ is not Cayley integral by (2) and so neither is ${\rm Dic}_{12}\times \mathbb{Z}_2$. Note that $A_4$ is isomorphic to ${\rm PSL}_2(\mathbb{Z}_3)$ and hence $A_4$ is a homomorphic image of ${\rm PSL}_2(\mathbb{Z}_3)$.  This shows that ${\rm PSL}_2(\mathbb{Z}_3)$ is not Cayley integral by (2).  This establishes $(4)$.

Finally, to prove (5), we note that $Q_8=\langle x,y,z~|~x^2=y^2=[x,y]=z,~z^2=1\rangle$ has a representation $\pi$ determined by
\[ x \mapsto \left( \begin{array}{rr} i & 0  \\ 0 & -i  \end{array}\right),~~~y \mapsto \left( \begin{array}{rr} 0 & 1 \\ -1 & 0   \end{array}\right).\]  
Thus $Q_8\times \langle t~|~t^4=1\rangle$ has a $2$-dimensional representation $\rho$ given by
$\rho((a,t^j))=i^j \pi(a)$ for $a\in Q_8$ and $j\in \mathbb{Z}$, where $i$ is a primitive fourth root of unity.
If we use the symmetric set $S=\{(x,t),(x^{-1},t^{-1}),(y,t),(y^{-1},t^{-1})\}$, we see that $\rho$ sends the element
$(x,t)+(x^{-1},t^{-1})+(y,t)+(y^{-1},t^{-1})$ from the group algebra to
\[  \left( \begin{array}{rr} -2 & 2i \\ -2i & 2  \end{array}\right),\]
which has eigenvalues $\pm 2\sqrt{2}$ and hence $Q_8\times \mathbb{Z}_4$ is not Cayley integral, giving us (5).
\end{proof}

\begin{cor} 
\label{cor: sym}
Let $H\le S_4$ be a Cayley integral subgroup of $S_4$ that acts transitively on $\{1,2,3,4\}$.  Then $H$ has order $4$.
\end{cor}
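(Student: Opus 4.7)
The plan is to enumerate all transitive subgroups of $S_4$ up to conjugacy and eliminate every possibility of order greater than $4$ by invoking Lemma~\ref{lem: list}. First, since $H$ acts transitively on a set of size $4$, the orbit-stabilizer theorem gives $4 \mid |H|$, while Lagrange's theorem forces $|H| \mid 24$. Hence $|H| \in \{4, 8, 12, 24\}$, and the goal is to rule out the three larger values.

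Next I would identify, for each of these orders, which subgroups of $S_4$ can arise. If $|H| = 24$, then $H = S_4$. If $|H| = 12$, then $H$ has index $2$ in $S_4$ and is therefore normal; the kernel of the sign homomorphism is the unique such subgroup, giving $H = A_4$. If $|H| = 8$, then $H$ is a Sylow $2$-subgroup of $S_4$, and all Sylow $2$-subgroups are conjugate to the standard copy $\langle (1234), (13)\rangle \cong D_4$. Since the property of being Cayley integral depends only on the abstract isomorphism type of the group, the particular conjugate chosen is immaterial.

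Finally, I would apply Lemma~\ref{lem: list} to dispose of each case: part (4) excludes $S_4$ as a non-abelian group of order $24$; part (2) excludes $A_4$ as a non-abelian group of order $12$ not isomorphic to $\mathrm{Dic}_{12}$; and part (1) excludes $D_4$. Thus none of $|H|\in\{8,12,24\}$ can occur, and the only remaining possibility is $|H|=4$. There is no real obstacle here beyond the bookkeeping of classifying transitive subgroups of $S_4$, which is a standard fact; the substantive work has already been done in Lemma~\ref{lem: list}.
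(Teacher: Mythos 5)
Your proof is correct and takes essentially the same approach as the paper: both reduce to identifying the transitive subgroups of orders $8$, $12$, $24$ as $D_4$, $A_4$, $S_4$ up to isomorphism and eliminate them via Lemma~\ref{lem: list} parts (1), (2), (4). The only cosmetic difference is that you invoke orbit--stabilizer to get $4 \mid |H|$ at the outset, whereas the paper dismisses orders $1,2,3$ directly by transitivity and order $6$ by observing that such subgroups are point stabilizers.
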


\begin{proof}
We note that if $H$ has order in $\{8,12,24\}$ then $H$ is isomorphic to one of $D_4$, $A_4$, or $S_4$ and hence is not Cayley integral by Lemma \ref{lem: list} (1), (2), and (4).  Each subgroup of order $6$ is equal to the set of permutations that fix some element $i\in \{1,2,3,4\}$ and hence does not act transitively on $\{1,2,3,4\}$.  Thus $H$ has order in $\{1,2,3,4\}$.  It is straightforward to check that a subgroup of order $1,2$, or $3$ cannot act transitively on $\{1,2,3,4\}$ and thus $H$ has order $4$.
\end{proof}

We now start the classification of Cayley integral groups by first classifying the Cayley integral $2$-groups.

\begin{lem}
\label{lem: 2} 
Let $Q$ be a Cayley integral $2$-group.  Then the following statements hold:
\begin{enumerate}
\item[\rm (i)] Every element of order $2$ is central.
\item[\rm (ii)] If $Q$ is non-abelian then any two elements that do not commute generate a subgroup that is isomorphic to $Q_8$.
\end{enumerate}
\end{lem}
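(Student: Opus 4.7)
The main idea is that any two involutions in a Cayley integral $2$-group $Q$ must commute: if $u,v$ are involutions with $|uv|=n$, then $\langle u,v\rangle\cong D_n$, and the only values of $n$ compatible with $Q$ being a $2$-group of exponent at most $4$ are $n\in\{1,2,4\}$, the last of which is ruled out by Lemma~\ref{lem: list}(1). I will apply this principle repeatedly, each time locating either a forbidden $D_4$-subgroup, a forbidden $D_4$-quotient (via Lemma~\ref{subgroup integral}), or an element of order~$8$.

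For part~(i), I would suppose $x\in Q$ is a non-central involution and choose $g\in Q$ with $y:=gxg^{-1}\neq x$. The commuting principle gives $\langle x,y\rangle\cong\ZZ_2\times\ZZ_2$, forces $|g|=4$ (else $x$ and $g$ commute), and forces $g^2$ to commute with $x$, hence with $y$. Therefore $M:=\langle x,y,g^2\rangle$ is elementary abelian. If $g^2\in\langle x,y\rangle$, a quick check rules out $g^2\in\{1,x,y\}$ and leaves $g^2=xy$, yielding $gx=xg^{-1}$ and $\langle x,g\rangle\cong D_4$, contradicting Lemma~\ref{lem: list}(1). Otherwise $|M|=8$ and $H:=\langle x,g\rangle=M\cdot\langle g\rangle$ has order $16$; the element $xyg^2$ is a central involution in $H$, and modding out $N:=\langle xyg^2\rangle$ identifies $\overline{xy}$ with $\overline{g^2}$, from which a short computation gives $\bar g\bar x=\bar x\bar g^{-1}$ and so $H/N\cong D_4$, again contradicting Lemma~\ref{lem: list}(1) via Lemma~\ref{subgroup integral}.

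For part~(ii), let $a,b\in Q$ not commute. Part~(i) and the order restriction force $|a|=|b|=4$, and then $a^2,b^2$ are central involutions. Writing $t:=[b,a]$, the centrality of $a^2,b^2$ yields $ata^{-1}=btb^{-1}=t^{-1}$ by a direct calculation. I would first rule out $|t|=4$: considering $K:=\langle a,t\rangle$, if $a^2\neq t^2$ then $K/\langle a^2\rangle\cong D_4$ (the dihedral relation $\bar a\bar t\bar a^{-1}=\bar t^{-1}$ with $\bar t$ still of order~$4$), contradicting Lemma~\ref{lem: list}(1). Hence $a^2=t^2$ and, symmetrically, $b^2=t^2$, so $a^2=b^2=t^2=:z$. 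But inside $K\cong Q_8$ one computes $ata=t$, so
\[
(ab)^2 = a\cdot bab^{-1}\cdot b^2 = a(ta)z = (ata)z = tz,
\]
which has order~$4$, forcing $|ab|=8$ --- impossible in $Q$. Therefore $|t|=2$, so by~(i) $t$ is central.

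It remains to analyze the three central involutions $a^2,b^2,t$. If $a^2=b^2=t$, then $bab^{-1}=ta=a^{-1}$ together with $b^2=a^2$ is precisely the presentation of $Q_8$, and $\langle a,b\rangle\cong Q_8$ as desired. Otherwise one of two things happens: either $(ab)^2=t\cdot a^2b^2=1$, so $ab$ is an involution and hence central by~(i), which forces $ba=ab$ via the identity $bab=(ab)b=ab^2$ --- contradicting our assumption that $a,b$ do not commute; or $(ab)^2\neq 1$, in which case a judicious choice of central subgroup $N\subseteq H:=\langle a,b\rangle$ (one of $\langle a^2\rangle$, $\langle b^2\rangle$, or $\langle a^2,b^2\rangle$) makes $\bar a,\bar b$ involutions in $H/N$ while $\bar t\neq\bar 1$, whence $(\bar a\bar b)^2=\bar t$ of order~$2$ gives $H/N\cong D_4$, contradicting Lemma~\ref{lem: list}(1). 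The most delicate step I expect is the elimination of $|t|=4$, where one must first force $\langle a,t\rangle\cong Q_8$ from the dihedral relation and the absence of a $D_4$-quotient, and then extract the order-$8$ element $ab$ by an explicit computation inside that copy of $Q_8$.
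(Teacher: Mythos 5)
Your part (i) is correct and complete (your dihedral ``commuting principle'' neatly replaces the paper's appeal to Proposition \ref{prop: bound}, and the order-$16$ analysis matches the paper's in spirit), and most of (ii) checks out, but there is a genuine gap in your final case analysis: the subcase $t=a^2\neq b^2$ (and its mirror $t=b^2\neq a^2$). There your stated mechanism cannot work: to make both $\bar a$ and $\bar b$ involutions you must put $a^2$ into $N$, but $t=a^2$ then forces $\bar t=\bar 1$, so none of $\langle a^2\rangle$, $\langle b^2\rangle$, $\langle a^2,b^2\rangle$ produces ``two involutions whose product has order $4$.'' Moreover this subcase really lands in your second branch, since $(ab)^2=ta^2b^2=b^2\neq 1$; the group $\langle a,b\rangle$ is then a genuine order-$16$ group ($\mathbb{Z}_4\rtimes\mathbb{Z}_4$ with $bab^{-1}=a^{-1}$ and $b^2\notin\langle a\rangle$), not excluded by anything earlier in your argument. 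The repair is easy and even uses a subgroup from your own list, but via a different mechanism: since $bab^{-1}=ta=a^{-1}$, pass to $H/\langle b^2\rangle$, where $\bar a$ retains order $4$ (because $a^2\neq b^2$), $\bar b$ is an involution inverting $\bar a$, and $\bar b\notin\langle\bar a\rangle$ (else $b\in\{a^2,a^2b^2\}$ would be central), so $H/\langle b^2\rangle\cong D_4$, contradicting Lemma \ref{lem: list} (1) via Lemma \ref{subgroup integral}; the mirror subcase uses $aba^{-1}=tb=b^{-1}$ and $N=\langle a^2\rangle$.

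Separately, your elimination of $|t|=4$ is correct as written but vacuous, and this is where the paper's proof is substantially shorter: since every element of $Q$ has order dividing $4$, every square lies in the set $N$ of elements of order at most $2$, which by (i) is a central elementary abelian subgroup; hence $Q/N$ has exponent $2$, is abelian, and $Q'\subseteq N$, so $t=[b,a]$ is \emph{automatically} a central involution. The paper then disposes of your entire case analysis of $a^2,b^2,t$ in one stroke: if $a^2\neq t$, then in the Cayley integral quotient $\langle a,b\rangle/\langle a^2\rangle$ the image of $a$ has order $2$, hence is central by (i), forcing the image of $t=[a,b]$ to be trivial even though $t\notin\langle a^2\rangle=\{1,a^2\}$ --- a contradiction; so $a^2=t$, symmetrically $b^2=t$, and the $Q_8$ presentation follows with no $(ab)^2$ dichotomy and no hunt for $D_4$ quotients. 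Adopting that single quotient argument would both close your gap and delete about half of your part (ii).
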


\begin{proof}
Let $N$ denote the set of elements in $Q$ of order at most $2$.  We claim that $N$ is a group.  To see this, it is sufficient to show that if $x,y\in N$ then $xy=yx$ since this implies that $(xy)^2 = x^2y^2 = 1$.  This will show that the set of elements of order at most $2$ is closed under multiplication and hence forms a group.  Moreover, it follows that $N$ is abelian.  Let $x,y\in N$ and let $E$ denote the subgroup of $N$ generated by $x$ and $y$. Then $E$ is a Cayley integral group and applying Proposition \ref{prop: bound} to the symmetric set $S=\{x,y\}$, we see that $|E|$ divides $12$.  Since $E$ is in $Q$ and $Q$ is a $2$-group, we see that $E$ has order at most $4$ and thus is abelian.  This means that $x$ and $y$ commute and since they were arbitrary elements of $N$, we thus have that $N$ is an abelian group as claimed.  We note that $N$ is normal, since the set of elements of order at most $2$ is closed under conjugation.

To complete the proof of (i), we must show that $N$ is a central subgroup of $Q$.  Suppose, towards a contradiction, that $N$ is not central.  Then there is some $u\in Q$ such that conjugation by $u$ induces a non-trivial automorphism of $N$.  Note that every element of $Q$ has order dividing $4$. Therefore $u^2\in N$ and so this automorphism must have order $2$.  Hence there are $x,y\in N$ with $x\neq y$ such that $uxu^{-1}=y$ and $uyu^{-1}=x$.  Let $Q_1$ denote the subgroup of $Q$ generated by $x,y$, and $u$.  Then $Q_1$ in non-abelian and has order $8$ or $16$.  Notice that $Q_1$ has at least four elements of order $4$ and hence must be isomorphic to $D_4$ if it has order $8$; but $D_4$ is not Cayley integral by Lemma \ref{lem: list} and so we see $Q_1$ must have order $16$.  In particular, $u$ has order $4$ and $\langle u\rangle$ intersects $\langle x,y\rangle $ trivially.  Thus $Q_1/\langle u^2\rangle$ is a Cayley integral group of order $8$ and, as before, we see that it is isomorphic to $D_4$,
a contradiction.  It follows that each element of $N$ is indeed central, which establishes (i).

We now prove (ii).  Suppose that $x,y \in Q$ and that they do not commute.  By (i), $x$ and $y$ must both have order at least $4$. But since $\langle x\rangle$ and $\langle y\rangle$ are Cayley integral, their order is equal to 4. Notice that since the square of every element is central, $Q/N$ is elementary abelian and so $Q'\subseteq N$.  In particular, $[x,y]=z$, where $z\in Q$ is a central element of order $2$.  We claim that $x^2=y^2=z$.  To see this, suppose that $x^2\neq z$ and let $H$ denote the subgroup of $Q$ generated by $x$ and $y$.  Then $E:=H/\langle x^2\rangle$ is a Cayley integral $2$-group and the image of $x$ in $E$ now has order $2$ and so it must be central.  But the image of $[x,y]=z$ in $E$ is non-trivial, a contradiction since by (i) we have that every element of order $2$ in $E$ is central.  It follows that $x^2=y^2=z$ and so $H$ is a non-abelian homomorphic image of the group with presentation
$$\langle s,t,u~|~s^4=t^4=u^2=1, s^2=t^2=[s,t]=u, [s,u]=[t,u]=1\rangle.$$
We note that this is just a presentation of $Q_8$ and since $H$ is non-abelian, we see that $H\cong Q_8$.
\end{proof}

\begin{prop} 
Let $Q$ be a non-abelian Cayley integral $2$-group.  Then $Q\cong Q_8\times \mathbb{Z}_2^d$ for some $d\ge 0$.
\label{prop: 2class}
\end{prop}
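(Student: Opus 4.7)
The plan is to combine Lemma~\ref{lem: 2} with the structure theory of class-$2$ groups whose derived subgroup is cyclic of order $2$. I fix non-commuting $x, y \in Q$; Lemma~\ref{lem: 2}(ii) then gives $H := \langle x, y\rangle \cong Q_8$, and I set $z := x^2 = y^2 = [x,y]$. My goal is to show $Q = H \times C$, where $C$ is an elementary abelian $2$-group complementing $\langle z \rangle$ inside $Z(Q)$.

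The first step is to show that every element of order $4$ in $Q$ squares to $z$ and is non-central, which will give both $Q' = \langle z\rangle$ and $Z(Q) = N$ (the subgroup of elements of order at most $2$). Given $u \in Q$ of order $4$, if $u$ fails to commute with $x$ or with $y$, then Lemma~\ref{lem: 2}(ii) immediately yields $u^2 = z$ and $u$ is non-central. To rule out the alternative, suppose $u$ centralises $H$; then $u$ commutes with $[x,y] = z$, and a direct commutator computation gives $[ux, y] = z \neq 1$, so $ux$ is non-central and has order $4$. Lemma~\ref{lem: 2}(ii) applied to $ux$ and $y$ would force $(ux)^2 = z$, but direct expansion gives $(ux)^2 = u^2 x^2 = u^2 z$, so $u^2 = 1$, contradicting $u$ having order $4$. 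Consequently every order-$4$ element is non-central and squares to $z$; combined with Lemma~\ref{lem: 2}(i) this gives $Z(Q) = N$, elementary abelian, while every non-trivial commutator lies in $\langle z\rangle$, so $Q' = \langle z\rangle$.

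Next I would analyse $V := Q/Z(Q)$ via the commutator pairing $\beta: V \times V \to Q' = \langle z\rangle \cong \mathbb{F}_2$, which is well-defined and $\mathbb{F}_2$-bilinear since $Q$ has nilpotency class $2$ and $[a,b]$ has order at most $2$. Using the class-$2$ identity $(ab)^2 = a^2 b^2 [a,b]$ together with $a^2 = b^2 = z$ for any non-central $a, b$, I would show that if two distinct non-zero classes $\bar a \neq \bar b$ admitted commuting representatives, then $(ab)^2 = z \cdot z = 1$, placing $ab$ in $Z(Q)$ and forcing $\bar a = \bar b$ in the $\mathbb{F}_2$-space $V$, a contradiction. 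Hence $\beta(\bar a, \bar b) \neq 0$ for every pair of distinct non-zero $\bar a, \bar b \in V$. This is extremely restrictive: if $\dim_{\mathbb{F}_2} V \geq 3$, then for any three independent $e_1, e_2, e_3 \in V$, bilinearity gives $\beta(e_1, e_2+e_3) = \beta(e_1,e_2) + \beta(e_1,e_3) = 0$ in $\mathbb{F}_2$, although $e_1$ and $e_2+e_3$ are distinct and non-zero, a contradiction. Non-abelianness rules out $\dim V \leq 1$, so $|V| = 4$ and $|Q| = 4|Z(Q)|$.

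The decomposition now falls out: since $H \cong Q_8$ has order $8$ and $H \cap Z(Q) = \langle z\rangle$ has order $2$, we get $|HZ(Q)| = 8|Z(Q)|/2 = |Q|$, so $Q = HZ(Q)$. Choosing any $\mathbb{F}_2$-linear complement $C$ of $\langle z\rangle$ inside the elementary abelian group $Z(Q)$ yields $H \cap C = \{1\}$, $HC = Q$, and $C$ central, so $Q = H \times C \cong Q_8 \times \mathbb{Z}_2^d$ with $d = \dim_{\mathbb{F}_2} C$. The delicate part is the pigeonhole argument in the third paragraph: the first two paragraphs are short manipulations of Lemma~\ref{lem: 2}, but the bound $\dim V \leq 2$ is what rules out the extraspecial alternatives --- central products of several copies of $Q_8$, which are non-abelian $2$-groups with $Q' = \langle z\rangle$ yet contain non-central elements of order $2$ --- and after that the product structure falls out without further work.
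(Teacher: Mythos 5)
Your proof is correct, and while it shares the paper's outer skeleton --- fix a non-commuting pair generating $H\cong Q_8$ via Lemma~\ref{lem: 2}(ii), show all order-$4$ elements share the square $z$, deduce $Q=HZ(Q)$, and split off a complement of $\langle z\rangle$ inside the elementary abelian centre --- it diverges substantively at the common-square step, and in a way worth noting. The paper handles an order-$4$ element $w$ with $w^2\neq z$ by showing $w$ centralises $H$, so that $\langle u,v,w\rangle\cong Q_8\times \mathbb{Z}_4$, and then invokes Lemma~\ref{lem: list}(5), whose proof is an explicit $2$-dimensional representation computation producing eigenvalues $\pm 2\sqrt{2}$. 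Your $ux$-trick extracts the contradiction from Lemma~\ref{lem: 2}(ii) alone: if $u$ centralises $H$ then $[ux,y]=z\neq 1$, so $\langle ux,y\rangle\cong Q_8$ forces $(ux)^2=z$, while direct expansion gives $(ux)^2=u^2z$, whence $u^2=1$. This is sound, and it buys something real: it shows the proposition is a purely group-theoretic consequence of the two conclusions of Lemma~\ref{lem: 2} (every involution central; non-commuting pairs generate $Q_8$), so the Cayley-integrality of $Q$ is consumed entirely by that lemma and Lemma~\ref{lem: list}(5) --- which the paper uses only in this proof --- becomes dispensable. Your second deviation, the pairing $\beta$ on $V=Q/Z(Q)$ with the pigeonhole bound $\dim_{\mathbb{F}_2} V\le 2$, is essentially a structural repackaging of the paper's element-chasing: the identity $\beta(e_1,e_2+e_3)=0$ is the same algebra as the paper's computation $(uvw)^2=1$ placing $uvw\in Z$, and both routes then conclude $Q=HZ(Q)$ and finish with the identical complement argument. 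One cosmetic quibble: your closing remark slightly misattributes the role of the bound $\dim V\le 2$, since the extraspecial central products of several copies of $Q_8$ contain non-central involutions and are therefore already excluded by Lemma~\ref{lem: 2}(i) (equivalently by your step-one conclusion $Z(Q)=N$); the pairing argument is needed not to kill those examples but to cap $|Q/Z(Q)|$ at $4$ once every non-central class is known to square to $z$. This does not affect the validity of the proof.
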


\begin{proof}
By Lemma \ref{lem: 2}, every element of order $2$ in $Q$ is central and any pair of non-commuting elements of $Q$ generate a subgroup that is isomorphic to $Q_8$. Moreover, every element is of order 1, 2, or 4.
Let $u,v$ be elements of order $4$ that generate a copy of $Q_8$.  Then there is a central element $z$ of order $2$ such that
$u^2=v^2=[u,v]=z$.  We claim that if $w$ is another element of order $4$ then $w^2 =z$.  To see this, note that if $w^2\neq z$ then $w$ and $u$ must commute since otherwise by Lemma \ref{lem: 2} (ii) they generate a copy of $Q_8$ with $w^2=[u,w]=u^2=z$.  Similarly, $[w,v]=1$ and since $w^2$ is central and not in $\{1,u^2\}$, we see that the group generated by $u,v$, and $w$ is isomorphic to $Q_8\times \mathbb{Z}_4$, which is not Cayley integral by Lemma \ref{lem: list} (5).  It follows that all elements of order $4$ in $Q$ have the same square.

Let $Z$ denote the central subgroup of $Q$ consisting of elements of order at most $2$.  By assumption, there exist $u$ and $v$ that do not commute and hence there is some $z\in Z$ such that $u^2=v^2=[u,v]=z$.  We claim that $Q$ is generated by $u$, $v$, and $Z$.  To see this, let $Q_0$ denote the subgroup of $Q$ generated by $u,v$, and $Z$ and suppose that there is some $w\in Q\setminus Q_0$.  Then $w$ has order $4$ and so $w^2=z$.  If $u$ and $w$ commute then $(uw)^2=u^2 w^2=z^2=1$ and so $uw\in Z$, which gives that $w\in Q_0$, a contradiction.  Thus $u$ and $w$ do not commute, which gives that $u^2=w^2=[u,w]=z$ by Lemma \ref{lem: 2} (ii).  Similarly, we have $v^2=w^2=[v,w]=z$.  Notice that $(uvw)^2 = 1$ and so $uvw\in Z$, which gives that $w\in v^{-1}u^{-1}Z\subseteq Q_0$, a contradiction.  Thus $Q=Q_0$ and so $Q$ is generated by $u,v$ and $Z$.  Now let $H$ be the subgroup of $Q$ generated by $u$ and $v$.  Then $H\cong Q_8$ and $H\cap Z=\langle z\rangle$.  Note that $Z$ is an elementary abelian $2$-group and so there is an elementary abelian subgroup $Z_1$ such that $Z_1\oplus \langle z\rangle = Z$.  Then we see that $Q\cong H\times Z_1\cong Q_8\times \mathbb{Z}_2^d$ for some $d\ge 0$.
\end{proof}

We now classify Cayley integral $3$-groups.  As it turns out, the classification in this case is simpler.

\begin{prop} 
Every Cayley integral $3$-group is elementary abelian.
\label{lem: 3group}
\end{prop}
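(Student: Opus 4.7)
The proof should be a short application of the bound in Proposition \ref{prop: bound} combined with the basic order restrictions collected in the remark on Cayley integral groups.

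First I would record the exponent restriction. Since every element of a Cayley integral group has order in $\{1,2,3,4,6\}$ and every element of a $3$-group has order a power of $3$, every non-identity element of $Q$ has order exactly $3$. So it suffices to show that $Q$ is abelian.

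Suppose, for contradiction, that $Q$ is non-abelian, and pick $x,y\in Q$ that do not commute. Both have order $3$, and since $x=y^{\pm 1}$ would force $x,y$ to commute, the set $S=\{x,x^{-1},y,y^{-1}\}$ has exactly $4$ elements; it is clearly symmetric and generates the subgroup $H:=\langle x,y\rangle$. By Lemma \ref{subgroup integral}, $H$ is itself Cayley integral, so $\Cayley(H,S)$ is an integral Cayley graph. Since $S$ contains elements of odd order (the elements of order $3$), the sharper conclusion of Proposition \ref{prop: bound} applies and yields that $|H|$ divides $(2|S|-1)!=7!=5040$.

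Now $5040=2^{4}\cdot 3^{2}\cdot 5\cdot 7$, so the largest power of $3$ dividing $5040$ is $9$. Because $H$ is a $3$-group, this forces $|H|\le 9$. But every group of order dividing $9$ is abelian, contradicting the assumption that $x$ and $y$ do not commute. Hence $Q$ is abelian, and combined with the fact that $Q$ has exponent dividing $3$, it is elementary abelian.

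The main (and essentially only) obstacle is noticing the right set to feed into Proposition \ref{prop: bound}: one needs $|S|$ small enough that the $3$-part of $(2|S|-1)!$ stays below $27$, which rules out any non-abelian $3$-group of exponent $3$ (the smallest one being the Heisenberg group of order $27$). Taking a generating pair of non-commuting elements gives exactly $|S|=4$ and $3$-part $9$, so the argument closes immediately.
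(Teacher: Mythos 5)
Your proof is correct and takes essentially the same approach as the paper: both apply Proposition \ref{prop: bound} to the symmetric set $S=\{x,x^{-1},y,y^{-1}\}$ for a pair of elements, conclude that the subgroup they generate has order dividing $7!$ and hence (being a $3$-group) dividing $9$, so any two elements commute, and then exponent $3$ gives elementary abelian. The only cosmetic difference is that you frame it as a contradiction with a non-commuting pair, while the paper applies the bound directly to an arbitrary pair of elements.
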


\begin{proof}
Let $x$ and $y$ be two elements of a Cayley integral group $P$ and let $P_0$ denote the subgroup of $P$ generated by $x$ and $y$.  Then $P_0$ is Cayley integral and so applying Proposition \ref{prop: bound} to the symmetric set $S=\{x,x^{-1},y,y^{-1}\}$ gives that the order of $P_0$ divides $7!$.  Since $P_0$ is a $3$-group, we see that $|P_0|$ divides $9$.  In particular $P_0$ is abelian and so $x$ and $y$ commute.  Since all elements of $P$ commute, we see that $P$ is abelian.  Since every element of $P$ has order $1$ or $3$, we see that $P$ is an elementary abelian $3$-group.
\end{proof}

\begin{cor} 
Let $G$ be a nilpotent Cayley integral group.  Then $G$ is either abelian or $G\cong Q_8\times \mathbb{Z}_2^d$ for some $d\ge 0$.
\label{cor: xxxx}
\end{cor}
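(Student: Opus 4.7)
My plan is to reduce the corollary to a direct assembly of Propositions \ref{prop: 2class} and \ref{lem: 3group} plus the element-order restriction in the remark. Since $G$ is Cayley integral, the remark gives that $|G|$ is a product of a power of $2$ and a power of $3$. Combining this with nilpotence, I will write $G \cong P \times Q$, where $P$ is the Sylow $2$-subgroup and $Q$ is the Sylow $3$-subgroup. Both $P$ and $Q$ are subgroups of $G$, hence Cayley integral by Lemma \ref{subgroup integral}.

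Next, I will invoke the two preceding structural propositions on the Sylow factors. Proposition \ref{lem: 3group} forces $Q$ to be elementary abelian (and in particular abelian). Proposition \ref{prop: 2class} forces $P$ to be either abelian or isomorphic to $Q_8 \times \mathbb{Z}_2^d$ for some $d \ge 0$. In the case that $P$ is abelian, $G = P \times Q$ is abelian and there is nothing more to prove.

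In the remaining case, $P \cong Q_8 \times \mathbb{Z}_2^d$ and I must show $Q$ must be trivial. The quickest argument is by contradiction: if $Q \ne 1$, then $Q$ contains an element $w$ of order $3$ and $P$ contains an element $u$ of order $4$ (for instance any generator of the quaternion factor). The pair $(u,w) \in P \times Q$ then has order $\mathrm{lcm}(4,3) = 12$, contradicting item (3) of the remark, which restricts the orders of elements in a Cayley integral group to $\{1,2,3,4,6\}$. Hence $Q = 1$ and $G \cong Q_8 \times \mathbb{Z}_2^d$, completing the classification.

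I do not anticipate any real obstacle here: the corollary is essentially a corollary in the strict sense, packaging the nontrivial work already carried out in the two propositions together with the elementary observation on element orders.
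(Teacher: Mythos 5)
Your proof is correct, and it follows the paper's proof almost exactly: the paper likewise writes a nilpotent $G$ as the direct product of its Sylow $2$- and $3$-subgroups and applies Propositions \ref{prop: 2class} and \ref{lem: 3group} to get $G\cong (Q_8\times \mathbb{Z}_2^d)\times \mathbb{Z}_3^e$ in the non-abelian case. The one point of divergence is how the factor $\mathbb{Z}_3^e$ is killed: the paper cites Lemma \ref{lem: list}~(4), observing that $e\ge 1$ would force $G$ to contain $Q_8\times \mathbb{Z}_3$, a non-abelian group of order $24$ that is not Cayley integral, whereas you exhibit an element of order $\mathrm{lcm}(4,3)=12$ and contradict the restriction that element orders lie in $\{1,2,3,4,6\}$ (equivalently, $\mathbb{Z}_{12}$ is not a Cayley integral subgroup). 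Your version is slightly more self-contained, since it bypasses the order-$24$ case analysis of Lemma \ref{lem: list} entirely; on the other hand, the two arguments rest on essentially the same fact, because the paper's proof of Lemma \ref{lem: list}~(4) itself screens out groups of order $24$ with elements of order $12$ in exactly this way. Either route is valid, and your assembly of the corollary has no gaps.
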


\begin{proof} 
If $G$ is nilpotent then $G$ must be a direct product of a Cayley integral $2$-group and a Cayley integral $3$-group.  Thus by Propositions \ref{prop: 2class} and \ref{lem: 3group}, if $G$ is non-abelian then
$G\cong (Q_8\times \mathbb{Z}_2^d)\times \mathbb{Z}_3^e$ for some $d,e\ge 0$.  Note that if $e\ge 1$ then $G$ contains a copy of $Q_8\times \mathbb{Z}_3$ which is not Cayley integral by Lemma \ref{lem: list} (4).  Hence $e=0$ and the result follows.
\end{proof}

We now begin to study non-nilpotent Cayley integral groups.  We first show that such groups necessarily have a unique Sylow $3$-subgroup.  To do this, we first require a few lemmas.

\begin{lem} 
Let $G$ be a Cayley integral group.  If $G$ has a normal Sylow $2$-subgroup then $G$ is nilpotent.
\label{lem: nilp}
\end{lem}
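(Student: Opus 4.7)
The plan. Since $P$ is normal in $G$ with $[G:P]$ a power of $3$, any Sylow $3$-subgroup $Q$ is a complement, so $G = P \rtimes Q$; showing that $Q$ centralizes $P$ yields $G = P \times Q$, which is nilpotent. By Proposition~\ref{lem: 3group}, $Q$ is elementary abelian, so it suffices to prove that an arbitrary element $y \in Q$ of order $3$ acts trivially on $P$ by conjugation.

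I would study the induced action of $y$ on the Frattini quotient $V := P/\Phi(P)$, which is an elementary abelian $2$-group. Since $\Phi(P)$ is characteristic in $P$ and $P \trianglelefteq G$, the subgroup $\Phi(P)$ is normal in $G$, and $y$ induces an automorphism $\bar y$ of $V$ with $\bar y^{\,3} = 1$. Suppose towards a contradiction that $\bar y \ne 1$. By Maschke's theorem $V$ is a semisimple $\mathbb{F}_2[\langle y\rangle]$-module, and because the only non-trivial irreducible $\mathbb{F}_2[\ZZ_3]$-module is $\mathbb{F}_4$ (of $\mathbb{F}_2$-dimension $2$), $V$ contains a $\bar y$-invariant subgroup $W \cong \ZZ_2^2$ on which $\bar y$ acts as a non-trivial automorphism of order~$3$. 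But then $W \rtimes \langle \bar y \rangle \cong A_4$ sits inside $G/\Phi(P)$, which is a homomorphic image of $G$ and hence Cayley integral by Lemma~\ref{subgroup integral}. Its subgroup $A_4$ would therefore also be Cayley integral, contradicting Lemma~\ref{lem: list}(2).

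Hence $\bar y = 1$, i.e.\ $y$ acts trivially on $V$. By Burnside's basis theorem the kernel of the restriction map ${\rm Aut}(P) \to {\rm Aut}(P/\Phi(P))$ is a $2$-group, so the image of $y$ in ${\rm Aut}(P)$ lies in a $2$-group; since $y$ has order $3$ this image must be trivial, meaning $y$ centralizes $P$. As $y$ was an arbitrary order-$3$ element and $Q$ is generated by such elements, $Q$ centralizes $P$, so $G = P \times Q$ is nilpotent.

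The main obstacle, in my view, is identifying the Frattini quotient as the right object to track. By Proposition~\ref{prop: 2class} the group $P$ is either abelian of exponent at most $4$ or isomorphic to $Q_8 \times \ZZ_2^d$, and a direct element-chasing approach would treat these two shapes separately (and in the non-abelian case require bookkeeping for lifts of elements of $P/Z(P)$ and their squares in $Z(P)$). Passing to $P/\Phi(P)$ collapses both cases into a single $\mathbb{F}_2[\ZZ_3]$-module calculation that makes the forbidden $A_4$ subgroup appear immediately, with Burnside's basis theorem then promoting trivial action on the Frattini quotient to trivial action on $P$.
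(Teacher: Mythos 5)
Your proof is correct, and it takes a genuinely different route from the paper's. The paper runs a minimal-counterexample induction: in a smallest non-nilpotent Cayley integral $G$ with normal Sylow $2$-subgroup $Q$, it shows each involution $z\in Z(Q)$ is central in $G$ --- otherwise an order-$3$ element $x$ together with $z$, $xzx^{-1}$, $x^2zx^{-2}$ generates a non-abelian Cayley integral group of order $12$ or $24$ with normal Sylow $2$-subgroup, impossible since by Lemma \ref{lem: list} the only non-abelian Cayley integral group of those orders is ${\rm Dic}_{12}$, whose Sylow $2$-subgroup is not normal --- and then applies minimality to $G/\langle z\rangle$, concluding via the fact that a central extension of a nilpotent group is nilpotent. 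You avoid induction entirely: viewing $P/\Phi(P)$ as a semisimple $\mathbb{F}_2[\ZZ_3]$-module, a non-trivial action of an order-$3$ element $y$ would exhibit $A_4\cong \ZZ_2^2\rtimes\ZZ_3$ inside the Cayley integral quotient $G/\Phi(P)$ (legitimate: $\Phi(P)$ is characteristic in $P$ hence normal in $G$, the image of $y$ retains order $3$ since $\Phi(P)$ is a $2$-group, $\mathbb{F}_4$ is the unique non-trivial irreducible, and any non-trivial $\ZZ_3$-action on $\ZZ_2^2$ yields $A_4$), contradicting Lemma \ref{lem: list}(2); the classical theorem that a $p'$-automorphism of a $p$-group inducing the identity on the Frattini quotient is the identity then upgrades trivial action on $P/\Phi(P)$ to trivial action on $P$, giving $G=P\times Q$ outright. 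There is no circularity: everything you cite (Lemma \ref{subgroup integral}, Lemma \ref{lem: list}, Proposition \ref{lem: 3group}) precedes this lemma in the paper. The trade-off: your argument needs only the order-$12$ exclusion where the paper also uses the order-$24$ case, and it yields the direct-product structure in one stroke, but it imports two standard external tools (Maschke and the $p'$-automorphism theorem); the paper's argument is longer and inductive but self-contained at the level of elementary group theory plus the lemmas already proved.
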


\begin{proof}
Suppose that this is not the case.  Then we can pick a non-nilpotent Cayley integral group $G$ of smallest order with respect to having a normal Sylow $2$-subgroup.

Let $Q$ denote the Sylow $2$-subgroup of $G$ and let $Z$ denote the center of $Q$.  Let $P$ be a Sylow $3$-subgroup of $G$.  Then $G$ is a semi-direct product $P\rtimes Q$.  Since $Z$ is a characteristic subgroup of $Q$ and $Q$ is normal in $G$, we see that if $x\in P$ then $xZx^{-1}=Z$.  Pick $z\in Z$ of order $2$.  We claim that $z$ commutes with every element of $P$.  To see this, suppose towards a contradiction, that there is some $x\in P$ such that $xz\neq zx$. Then $z_1:=xzx^{-1}$ and $z_2:=x^2 zx^{-2}$ have the property that the subgroup of $Z$ generated by $z,z_1,z_2$ is an elementary abelian $2$-group of order either $4$ or $8$ and hence the group generated by $x$ and $z$ must have order $12$ or $24$.  By Lemma \ref{lem: list}, the only non-abelian Cayley integral group of order either $12$ or $24$ is isomorphic to the dicyclic group of order $12$, but this one does not have a normal Sylow $2$-subgroup, a contradiction.

Thus we see that $xz=zx$ for every $z\in Z$ and $x\in P$. This means that the centralizer of $z$ contains both $P$ and $Q$ and thus must contain all of $G$.  Notice that $H:=G/\langle z\rangle$ is a Cayley integral group with the property that it has a normal Sylow $2$-subgroup.  By minimality of the order of $G$ we see that $H$ is nilpotent.  It follows that $G$ is nilpotent, since we obtained $H$ by taking the quotient of $G$ with a central subgroup.
\end{proof}

\begin{lem}  
Let $G$ be a Cayley integral group generated by two elements of order $3$. Then $G$ is isomorphic to $\ZZ_3$ or to $\ZZ_3 \times \ZZ_3$.
\label{lem: l144}\label{lem: 144}
\end{lem}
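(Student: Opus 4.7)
The plan is to show that $|G|$ must be a power of $3$ and then invoke Proposition~\ref{lem: 3group}, noting that an elementary abelian $3$-group generated by two elements is isomorphic to $\ZZ_3$ or $\ZZ_3\times\ZZ_3$. As a setup I would first dispose of the case $\langle x\rangle=\langle y\rangle$ (which gives $G\cong\ZZ_3$) and then apply Proposition~\ref{prop: bound} to the symmetric generating set $S=\{x,x^{-1},y,y^{-1}\}$ of cardinality $4$; since $x$ has odd order, this yields $|G|\mid 7!$, which combined with the Remark (that $G$ is a $(2,3)$-group) gives $|G|\mid 144=2^4\cdot 3^2$. Moreover, since no element of $G$ has order $9$, any Sylow $3$-subgroup $P$ of $G$ is elementary abelian of order $3$ or $9$.

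I would then argue by contradiction that $|G|$ cannot be even. If it were, $G$ could not be nilpotent, for otherwise $G=Q\times P$ with $Q$ a nontrivial Sylow $2$-subgroup, and then $x,y\in P$ would force $\langle x,y\rangle\subsetneq G$. Lemma~\ref{lem: nilp} then implies that no Sylow $2$-subgroup is normal; and $P$ is not normal either, since otherwise $P$ would contain every order-$3$ element and again $\langle x,y\rangle\subseteq P\ne G$. Writing $|G|=2^a 3^b$ with $a\le 4$, the Sylow theorems restrict the number $n_3$ of Sylow $3$-subgroups to $n_3\in\{4,16\}$.

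The main obstacle will be ruling out these two cases. For $n_3=4$ I would consider the conjugation action of $G$ on the set of Sylow $3$-subgroups, which gives a transitive homomorphism $\phi\colon G\to S_4$. Its image is Cayley integral (being a homomorphic image of $G$) and transitive on $\{1,2,3,4\}$, so Corollary~\ref{cor: sym} forces $|\phi(G)|=4$. Hence $|\ker\phi|=|G|/4=|N_G(P)|$, and since $\ker\phi\subseteq N_G(P)$ the two subgroups coincide, making $N_G(P)$ normal in $G$. Every conjugate of $P$ then lies in $N_G(P)$, whose unique Sylow $3$-subgroup is $P$, giving $n_3=1$, a contradiction. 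For $n_3=16$ we have $a=4$ and $|N_G(P)|=|G|/16=|P|$, so $N_G(P)=P$; since $P$ is abelian, $C_G(P)=N_G(P)$, and Burnside's normal $p$-complement theorem then produces a normal Sylow $2$-subgroup of $G$, contradicting the earlier step. This completes the reduction to the $3$-group case.
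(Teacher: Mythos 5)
Your proposal is correct, and its skeleton matches the paper's up to the Sylow reduction: Proposition~\ref{prop: bound} applied to $S=\{x,x^{-1},y,y^{-1}\}$ gives $|G|\mid 144$, the case $n_3=1$ (equivalently, your ``$|G|$ is a power of $3$'') is settled by Proposition~\ref{lem: 3group}, and $n_3=4$ is killed via the conjugation action on Sylow $3$-subgroups together with Corollary~\ref{cor: sym} --- though the paper argues a touch more directly there (the image has order $4$, so every Sylow $3$-subgroup, hence $x$ and $y$, lies in the proper kernel), while you show $\ker\phi=N_G(P)$ is normal and deduce $n_3=1$; both are fine. The genuine divergence is the case $n_3=16$, which is where the paper works hardest: it splits into $|G|=48$ and $|G|=144$, counts elements of order $3$ to manufacture a normal Sylow $2$-subgroup when distinct Sylow $3$-subgroups intersect trivially (then applies Lemma~\ref{lem: nilp}), and otherwise picks $u$ generating a nontrivial intersection, proves $C_G(u)$ is normal and in fact equals $G$, and finally passes to $G/\langle u\rangle$ of order $48$, invoking the lemma inductively for smaller orders. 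You instead observe that $|N_G(P)|=|G|/n_3=|P|$, so $P$ is an abelian self-normalizing Sylow $3$-subgroup with $C_G(P)=N_G(P)=P$, and Burnside's normal $p$-complement theorem immediately produces a normal Sylow $2$-subgroup, contradicting the non-nilpotency you established at the outset (via Lemma~\ref{lem: nilp}). This is substantially shorter, treats $|G|=48$ and $|G|=144$ uniformly, and sidesteps the paper's slightly delicate strong-induction step (``since we already proved the lemma for groups of smaller order''); the price is an appeal to Burnside's transfer theorem, a classical but non-elementary tool outside the Sylow-plus-counting toolkit to which the paper deliberately confines itself. Your two small preliminary additions --- disposing of $\langle x\rangle=\langle y\rangle$ so that $|S|=4$ genuinely holds, and noting that nilpotency of $G$ would force $\langle x,y\rangle$ into the Sylow $3$-factor --- are correct points the paper leaves implicit.
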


\begin{proof}
Let $x$ and $y$ be elements of order $3$ in $G$ that generate $G$ as a group.
Notice that the set $S=\{x,x^{-1},y,y^{-1}\}$ has size $4$ and since $x$ has odd order we see from Proposition \ref{prop: bound} that the order of $G$ divides $7!$. Since $G$ is a $(2,3)$-group, we see that the order of $G$ in fact divides $144$.  

We let $n_3$ denote the number of Sylow $3$-subgroups of $H$.  It is known that $n_3\equiv 1$ (mod $3$) and that $n_3$ divides the index of the Sylow subgroup in $G$. Since $|G|$ divides 144, the index is $2^t$, where $0\le t\le 4$, thus $n_3\in \{1,4,16\}$. 

If $n_3=1$, then $G$ has a unique Sylow $3$-subgroup, which is elementary abelian by \ref{lem: 3group}.  Hence $x$ and $y$ commute and so they generate a group of order $3$ or $9$.  This yields the conclusion of the lemma.

In the rest of the proof we argue by contradiction, considering the cases $n_3=4$ and $n_3=16$ separately.

Suppose that $n_3=4$.  Then $G$ acts on the Sylow $3$-subgroups by conjugation, which gives us a non-trivial homomorphism $\pi$ from $G$ to $S_4$.   Let $G_0$ denote the image of $G$ under $\pi$.  Since the collection of Cayley integral groups is closed under the process of taking subgroups and homomorphic images, $G_0$ is a Cayley integral subgroup of $S_4$. Moreover, by construction $G_0$ acts transitively on $\{1,2,3,4\}$ since $G$ acts transitively on the set of Sylow $3$-subgroups under conjugation.  By Corollary \ref{cor: sym}, $G_0$ has order $4$.  Let $N$ denote the kernel of $\pi$.  Then $N$ has order dividing $36$ and by construction it contains all Sylow $3$-subgroups and in particular contains $x$ and $y$.  But this means that the group generated by $x,y$ is contained in $N$, a contradiction since $N$ is a proper subgroup of $G$.  We conclude that $n_3=4$ cannot occur.

Suppose next that $n_3=16$.  
Suppose first that $|G|\ne 144$. Since $n_3=16$, we know that $16$ divides the order of $G$ and since $G$ is a proper divisor of $144$ and $3$ divides the order of $G$, we see that $|G|=48$.  Then each pair of distinct Sylow $3$-subgroups must intersect trivially since they are all cyclic groups of order $3$.  Thus there are $16\cdot 2=32$ elements of order $3$.  This leaves $16$ unaccounted elements, which necessarily make up a normal Sylow $2$-subgroup.  By Lemma \ref{lem: nilp}, we see that $G$ is nilpotent and thus $n_3=1$, a contradiction.

Suppose now that $|G|=144$.
If each pair of distinct Sylow $3$-subgroups intersect trivially then $G$ has $8\cdot n_3=128$ elements of order $3$.  This leaves $16$ unaccounted for elements in $G$, which must make up a normal Sylow $2$-subgroup.  By Lemma \ref{lem: nilp}, $G$ is nilpotent, which gives that $n_3=1$, a contradiction.

Thus $G$ has distinct Sylow $3$-subgroups $P$ and $Q$ such that $P\cap Q=\langle u\rangle$ is a group of order $3$.  Notice that $P$ and $Q$ both have order $9$ and hence are abelian.  It follows that $C_G(u)$, the centralizer of $u$ in $G$, contains the groups $P$ and $Q$. It follows that its order is a multiple of $9$ and since it contains two distinct Sylow $3$-subgroups it must have at least four Sylow subgroups and so its order must in fact be in $\{36,72,144\}$.

Our next step is to show that $C_G(u)$ is normal in $G$.  If $C_G(u)$ has order $72$ or $144$, this is automatic, so we may assume that $|C_G(u)|=36$.  Then $G$ acts on the left cosets of $C_G(u)$, giving a homomorphism $\rho$ to $S_4$.  Let $E$ denote the image of $\rho$ in $S_4$.  By assumption the image of $\rho$ is a Cayley integral group that acts transitively on $\{1,2,3,4\}$ and hence $E$ must have order $4$ by Corollary \ref{cor: sym}.  Thus the kernel of $\rho$ has size $36$ and since the kernel of $\rho$ is contained in $C_G(u)$, we see that $C_G(u)$ is normal in this case.

We now show that $C_G(u)=G$.  Since $G$ is generated by $x$ and $y$, it is sufficient to show that $u$ commutes with $y$.  Let $Z_1$ denote the Sylow $3$-subgroup of the center of $C_G(u)$.  Note that $Z_1$ is characteristic in $C_G(u)$ and hence normal in $G$. Moreover, $Z_1$ is non-trivial since $u\in Z_1$.  Notice that if $Z_1$ has order $9$ then it is a Sylow subgroup of $G$ and since all Sylow subgroups are conjugate and $C_G(u)$ is normal we see that $x$ and $y$ are in $C_G(u)$, which gives that $G=C_G(u)$ since $x$ and $y$ are generators of $G$.  Thus $Z_1=\langle u\rangle$.  Notice that $xZ_1x^{-1}=Z_1$ and so $xux^{-1}\in \{u,u^{-1}\}$.  If $xu=u^{-1}x$ then $u=x^3 u=u^{-1}x^3 =u^{-1}$, a contradiction.  Thus $xu=ux$.  Similarly, $yu=uy$, which gives that $x,y\in C_G(u)$ and so $C_G(u)=G$.

Now $H:=G/\langle u\rangle = C_G(u)/\langle u\rangle$ is a Cayley integral group of order $48$ and is generated by two elements of order $3$. Since we already proved the lemma for groups whose order is less than 144, we can apply the lemma to the group $H$. It follows that $H$ has order $3$ or $9$. This gives a contradiction and completes the proof by showing that $n_3\ne16$ when $|G|=144$.
\end{proof}

\begin{prop}
Let $G$ be a Cayley integral group.  Then $G$ has a normal abelian Sylow $3$-subgroup.
\label{thm: 3grp}
\end{prop}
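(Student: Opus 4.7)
The plan is to prove normality first; the abelianness is essentially immediate from earlier results. Since every subgroup of a Cayley integral group is Cayley integral, any Sylow $3$-subgroup $P$ of $G$ is itself a Cayley integral $3$-group and hence elementary abelian by Proposition \ref{lem: 3group}. So it remains to show that $G$ has a unique Sylow $3$-subgroup.

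The crucial input is Lemma \ref{lem: l144}: if $x$ and $y$ are any two elements of $G$ of order $3$, then the subgroup $\langle x,y\rangle$ is a Cayley integral group generated by two elements of order $3$, hence isomorphic to $\mathbb{Z}_3$ or $\mathbb{Z}_3\times \mathbb{Z}_3$. In either case $\langle x,y\rangle$ is abelian, so $x$ and $y$ commute. Thus the elements of $G$ of order $3$ pairwise commute.

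Let $H$ be the subgroup of $G$ generated by all elements of order $3$. Since any two such elements commute, $H$ is abelian, and every generator has order dividing $3$, so $H$ is an elementary abelian $3$-group. Being generated by the set of elements of order $3$ (a characteristic subset), $H$ is a characteristic subgroup of $G$, and in particular normal in $G$.

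Finally, every Sylow $3$-subgroup of $G$ is elementary abelian by Proposition \ref{lem: 3group}, so all its non-identity elements have order $3$ and it is therefore contained in $H$. Conversely, $H$ is itself a $3$-subgroup of $G$, hence contained in some Sylow $3$-subgroup. These two inclusions force $H$ to coincide with every Sylow $3$-subgroup of $G$; in other words, $G$ has a unique Sylow $3$-subgroup, which is normal and elementary abelian. There is no real obstacle here since the hard work has been packed into Lemma \ref{lem: l144}; the only point requiring a little care is verifying that once all elements of order $3$ pairwise commute, the subgroup they generate is genuinely an elementary abelian $3$-group, which follows from the standard fact that an abelian group generated by elements of order dividing a prime $p$ is elementary abelian of exponent $p$.
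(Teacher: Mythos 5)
Your proof is correct and follows essentially the same route as the paper: both arguments use Lemma \ref{lem: 144} to show that elements of order $3$ pairwise commute, then conclude that the $3$-elements form a single normal (indeed characteristic) elementary abelian subgroup that must be the unique Sylow $3$-subgroup. The only difference is packaging--the paper observes directly that the set of elements of order dividing $3$ is closed under multiplication, while you take the subgroup they generate and sandwich it between Sylow $3$-subgroups--and both are equally valid.
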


\begin{proof}
By Lemma \ref{lem: 144} any two elements of order $3$ generate a group of order $3$ or $9$.  Since groups of orders $3$ and $9$ are abelian, it follows that any two elements of order $3$ commute.  Thus the product of two elements of order $3$ has order $1$ or $3$.  This shows that elements of order dividing $3$ are closed under multiplication in $G$ and hence form a group.  This group is necessarily the unique Sylow $3$-subgroup of $G$ and so $G$ has a normal Sylow $3$-subgroup.  By Proposition \ref{lem: 3group}, this group must be abelian.
\end{proof}

\begin{cor} 
Let $G$ be a non-nilpotent Cayley integral group.  Then $G$ is isomorphic to either $S_3$ or ${\rm Dic}_{12}$.
\label{nonnilp}
\end{cor}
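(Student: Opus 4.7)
The plan builds on Proposition \ref{thm: 3grp} and Proposition \ref{lem: 3group}, which together say that the Sylow $3$-subgroup $P$ of $G$ is normal and elementary abelian; combined with Schur--Zassenhaus (applicable because $G$ is a solvable $\{2,3\}$-group) this gives $G = P \rtimes Q$ for a Cayley integral Sylow $2$-subgroup $Q$. By Proposition \ref{prop: 2class}, $Q$ is abelian of exponent dividing $4$ or $Q \cong Q_8 \times \mathbb{Z}_2^d$; by Lemma \ref{lem: nilp}, non-nilpotence of $G$ forces the conjugation action of $Q$ on $P$ to be non-trivial.

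The first main step is to reduce to $P \cong \mathbb{Z}_3$. Assume $\dim_{\mathbb{F}_3} P \geq 2$ and choose $q \in Q$ that acts non-trivially on $P$. If $q$ has order $2$, or if $q$ has order $4$ and $q^2$ acts trivially on $P$, then $q$ acts on $P$ as an involution, which is diagonalizable over $\mathbb{F}_3$ and yields a decomposition $P = P^+ \oplus P^-$ into $\pm 1$-eigenspaces. If $\dim P^- \geq 2$, then a $2$-dimensional subspace $V \subseteq P^-$ produces $E_9 \cong \mathbb{Z}_3^2 \rtimes \mathbb{Z}_2$ as a subgroup (when $q$ has order $2$) or as a quotient by the central $\langle q^2 \rangle$ of a subgroup (when $q$ has order $4$), forbidden by Lemma \ref{lem: list} (3). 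If $\dim P^- = 1$ and $\dim P^+ \geq 1$, then $\langle P, q\rangle \cong \mathbb{Z}_3^{\dim P^+} \times S_3$ (order-$2$ case) or $\mathbb{Z}_3^{\dim P^+} \times {\rm Dic}_{12}$ (order-$4$ case); the former is ruled out by Lemma \ref{lem: list} (3), while the latter contains an element of order $12$, contradicting the fact that every element of a Cayley integral group has order in $\{1,2,3,4,6\}$. Finally, if $q$ has order $4$ with $q^2$ acting non-trivially on $P$, we apply the order-$2$ analysis to $q^2$. In every branch we reach a contradiction, so $P \cong \mathbb{Z}_3$.

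With $P \cong \mathbb{Z}_3$, the action of $Q$ on $P$ factors as a surjection $Q \to \mathrm{Aut}(\mathbb{Z}_3) \cong \mathbb{Z}_2$ with kernel $K \trianglelefteq G$ of index $2$ in $Q$. Fix $q \in Q \setminus K$. If some such $q$ can be chosen of order $2$, then $\langle P, q\rangle \cong S_3$, and for any involution $k \in K$ (which is central in $Q$ by Lemma \ref{lem: 2} (i)), $\langle P, q, k\rangle \cong S_3 \times \mathbb{Z}_2 \cong D_6$ is forbidden by Lemma \ref{lem: list} (1); this forces $K = \{1\}$ and $G \cong S_3$. Otherwise every element of $Q \setminus K$ has order $4$, $q^2 \in K$, and $\langle P, q\rangle \cong {\rm Dic}_{12}$. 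If $K = \langle q^2\rangle$ we obtain $Q = \langle q\rangle \cong \mathbb{Z}_4$ and $G \cong {\rm Dic}_{12}$. The remaining task is to exclude $K \supsetneq \langle q^2\rangle$: if $K$ contains an involution $k \neq q^2$, centrality of $k$ in $Q$ yields $\langle P, q, k\rangle \cong {\rm Dic}_{12} \times \mathbb{Z}_2$, forbidden by Lemma \ref{lem: list} (4); and if $K$ has a unique involution, Cayley integrality of $K$ together with Proposition \ref{prop: 2class} forces $K \cong \mathbb{Z}_4$ or $K \cong Q_8$. In the $\mathbb{Z}_4$ case, either $Q$ is abelian and $(kq^{-1})^2 = 1$ gives an order-$2$ element in $Q \setminus K$ (contradicting the present assumption), or $Q \cong Q_8$ and one finds a generator $k$ of $K$ with $\langle k, q\rangle \cong Q_8$; in the $Q_8$ case one has $Q \cong Q_8 \times \mathbb{Z}_2$ and one exhibits $k \in K$ with $\langle k, q\rangle \cong Q_8$ by a direct quaternion computation. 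Either way, $\langle P, k, q\rangle \cong \mathbb{Z}_3 \rtimes Q_8$ is a non-abelian group of order $24$, forbidden by Lemma \ref{lem: list} (4).

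The main obstacle is the final exclusion of $K \supsetneq \langle q^2\rangle$ when $K$ has a unique involution: one has to pinpoint, in each of the possibilities $Q \cong Q_8$ or $Q \cong Q_8 \times \mathbb{Z}_2$, a concrete element $k \in K$ so that $(k, q)$ satisfies the defining relations of $Q_8$, from which the forbidden subgroup $\mathbb{Z}_3 \rtimes Q_8$ of order $24$ emerges and the contradiction follows from Lemma \ref{lem: list} (4).
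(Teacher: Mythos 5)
Your proposal is correct, but it takes a genuinely different route from the paper's at both of its main stages, so a comparison is worthwhile. For the reduction to $|P|=3$, the paper argues purely group-theoretically: using Lemma \ref{lem: 144} it shows every order-$2$ element of $Q$ conjugates each $x\in P$ into $\{x,x^{-1}\}$, and then manufactures forbidden non-abelian groups of order $18$ (Lemma \ref{lem: list} (3)), or of order $36$ whose quotient by a central $\langle w^2\rangle$ has order $18$; your route instead exploits Proposition \ref{lem: 3group} to view $P$ as an $\mathbb{F}_3$-vector space, diagonalizes the involutive action to get $P=P^+\oplus P^-$, and kills each eigenspace configuration via $E_9$, $\mathbb{Z}_3\times S_3$, or an element of order $12$ --- a more transparent, linear-algebraic packaging of the same contradictions (your quotient-by-$\langle q^2\rangle$ trick in the order-$4$ branch is exactly the paper's order-$36$-to-order-$18$ move). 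In the endgame the paper is slicker: it first observes that $Q$ must be \emph{abelian}, since otherwise $Q$ contains a copy of $Q_8$ by Lemma \ref{lem: 2} (ii), and then $P\rtimes Q_8\le G$ is non-abelian of order $24$ \emph{regardless of how, or whether, that copy acts on} $P$, contradicting Lemma \ref{lem: list} (4); it then bounds $|Q|\le 4$ by embedding a non-trivially acting element in an order-$8$ subgroup $Q_0$ (so $P\rtimes Q_0$ is again non-abelian of order $24$) and finishes by citing the classification of non-abelian groups of order $6$ and $12$ (Lemma \ref{lem: list} (2)). Your kernel analysis ($K=\ker(Q\to{\rm Aut}(P))$) is valid throughout, but the step you flag as the main obstacle is unnecessary: in the $K\cong Q_8$ case the subgroup $\langle P,K\rangle\cong \mathbb{Z}_3\times Q_8$ is already non-abelian of order $24$, and in the $K\cong\mathbb{Z}_4$, $Q\cong Q_8$ case $G=P\rtimes Q$ itself has order $24$ --- non-abelianness comes for free from $Q_8$, so there is no need to produce a copy of $Q_8$ containing the non-trivially acting element $q$ by quaternion computations. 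What your approach buys is an explicit reconstruction of $S_3$ and ${\rm Dic}_{12}$ without invoking the order-$12$ classification and a cleaner conceptual picture of the action; what the paper's buys is a much shorter $2$-group endgame, precisely because Lemma \ref{lem: list} (4) forbids \emph{every} non-abelian group of order $24$, faithful action or not.
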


\begin{proof}
By Proposition \ref{thm: 3grp}, $G$ has a normal Sylow $3$-subgroup, $P\cong \mathbb{Z}_3^d$. Moreover, $d\ge 1$ since $G$ is not nilpotent.  Let $Q$ be a Sylow $2$-subgroup of $G$. Then $G=P \rtimes Q$.

We first claim that if $x\in P$ and if $y\in Q$ has order $2$, then $yxy^{-1}\in \{x,x^{-1}\}$.  To see this, suppose that $yxy^{-1}=u\not \in \{x,x^{-1}\}$.  Then $u$ is of order 3 and $x$ and $u$ generate a group of order $9$ by Lemma \ref{lem: 144}. Consequently, $x,y,u$ generate a non-abelian subgroup of $G$ of order $18$.  But this contradicts Lemma \ref{lem: list} (3), since a non-abelian group of order $18$ cannot be Cayley integral.

We next claim that if $|P|\ge 9$ and if $y\in Q$ has order $2$, then $yx=xy$ for every $x\in P$.  To see this suppose that there is some $x\in P$ such that $yx\neq xy$.  As shown above, we have $yxy^{-1}=x^{-1}$.  Let $u\in P$ be such that $\langle x,u\rangle$ has order $9$.  Then since $yuy^{-1}\in \{u,u^{-1}\}$, we see that $u,x,y$ generate a non-abelian group of order $18$.  But this is a contradiction, since Lemma \ref{lem: list} says that no such group can be Cayley integral.  Thus we have shown that either $|P|=3$ or we have $yx=xy$ whenever $y\in Q$ has order $2$ and $x\in P$.

We next claim that if $|P|\ge 9$ and $w\in Q$ then $wxw^{-1}\in \{x,x^{-1}\}$ for every $x\in P$.  To see this, suppose that this is not the case.  Then $wxw^{-1}=u\not \in \{x,x^{-1}\}$. By the above, the order of $w$ is greater than 2 and since $G$ is Cayley integral and $w\in Q$, its order must be 4. Thus $w^2$ has order 2 and hence $w^2 x = xw^2$. This implies that $wuw^{-1}=x$ and so the group generated by $u,x,w$ is a non-abelian group of order $36$ and $w^2$ is central.  Notice that the quotient of the group generated by $u,x,w$ by $\langle w^2\rangle$ is a non-abelian group of order $18$ and hence it cannot be Cayley integral by Lemma \ref{lem: list} (3).  This is a contradiction and so we conclude that if $|P|\ge 9$ then whenever $x\in P$ we have that $\langle x\rangle$ is normal in $G$ since its normalizer contains both $P$ and $Q$.

We now claim that $|P|\le 3$. If $|P|\ge 9$, then notice that $P$ cannot be central in $G$ since $G$ is not nilpotent.  Thus there is some $y\in Q$ and some $x\in P$ such that $xy\neq yx$.  We have just shown that we must have $yxy^{-1}=x^{-1}$.  Pick $u\in P$ such that $u$ and $x$ generate a subgroup of $P$ of order $9$.  Then $\langle u,x\rangle$ is normal in $G$, the group $E$ generated by $y,u,x$ has order $36$, and $y^2$ is central in $E$.  But by construction, $E/\langle y^2\rangle$ is a non-abelian group of order $18$ and hence cannot be Cayley integral.  It follows that $|P|\le 3$, as claimed.  Moreover, since $G$ is not nilpotent, $|P|=3$.

We next claim that $Q$ is abelian.  If not, then $Q$ contains a copy of $Q_8$.  Then $G$ contains a copy of $P\rtimes Q_8$, which is not Cayley integral by Lemma \ref{lem: list} (4), since $P\rtimes Q_8$ is a non-abelian group of order $24$.  Thus $Q$ is abelian.

Finally, we claim that $Q$ has order at most $4$.  To see this, suppose that $|Q|\ge 8$.  By assumption, $G$ is non-nilpotent and so there is some $u\in Q$ such that conjugation by $u$ induces a non-trivial automorphism of $P$.  Since $Q$ is an abelian $2$-group of order at least $8$, there is a subgroup $Q_0$ of $Q$ of order $8$ that contains $u$.  Then $P\rtimes Q_0$ is a non-abelian group of order $24$ and so by Lemma \ref{lem: list} (4) is not Cayley integral, a contradiction.  Thus $Q$ has order at most $4$ and since $G$ is not nilpotent it must have order at least $2$.  Hence $G=P\rtimes Q$ has order $6$ or $12$.
Since $G$ is not nilpotent, we see by Lemma \ref{lem: list} that $G\cong S_3$ if $|G|=6$, and $G\cong {\rm Dic}_{12}$ if $|G|=12$.  This completes the proof.
\end{proof}

We are now ready to give the proof of the classification result for Cayley integral groups.

\begin{proof}[Proof of Theorem \ref{thm: main}.]
If $G$ is not nilpotent, then by Corollary \ref{nonnilp} we have that $G\cong {\rm Dic}_{12}$ or $G\cong S_3$.  If $G$ is nilpotent and non-abelian then by Corollary \ref{cor: xxxx} we see that $G\cong Q_8\times \mathbb{Z}_2^d$ for some $d\ge 0$.  If $G$ is abelian then by Theorem \ref{thm: KS} we have that $G\cong \mathbb{Z}_3^d \times \mathbb{Z}_2^e$ or $G\cong \mathbb{Z}_2^d\times \mathbb{Z}_4^e$ for some $d,e\ge 0$.  By Lemma \ref{lem: list0} all of these groups are Cayley integral.
\end{proof}


\begin{thebibliography}{99}

\bibitem{AB} 
Abdollahi, A. and Jazaeri M.,
On groups admitting no integral Cayley graphs besides complete multipartite graphs,
Appl. Anal. Discrete Math. 7 (2013), 119--128.

\bibitem{Azhvan} Ahmady, A. and Bell, J.,
On integral Cayley graphs over dihedral groups, 
in preparation.

\bibitem{ADKM} 
Ahmady, A., DeVos, M., Krakovski, R. and Mohar, B.,
Integral Cayley multigraphs over Abelian and Hamiltonian groups, 
Electr. J. Combin., to appear.

\bibitem{BM} 
Bridges, W.G., and Mena, R.A., 
Rational G-matrices with rational eigenvalues,
J. Combin. Theory, Ser. A 32 (1982), 264--280.

\bibitem{AP} 
Alperin, R. C. and Peterson, B. L., 
Integral sets and Cayley graphs of finite groups, 
Electron. J. Combin. 19 (2012), no. 1, Paper 44, 12~pp.

\bibitem{DS} 
Diaconis, P., and Shahshahani, M.,
Generating a random permutation with random transpositions,
Z. Wahsch. Verw. Gebiete 57 (1981), 159--179.

\bibitem{FH} 
Fulton, W., Harris, J.,
Representation Theory. A First Course, Springer-Verlag, 1991.

\bibitem{GS} 
Godsil, C., and Royle, G.,
Algebraic Graph Theory.
Graduate Texts in Mathematics, Vol.~207, Springer, 2001.

\bibitem{HS} 
Harary, F., and Schwenk, A. J.,
Which graphs have integral spectra?
in ``Graphs and Combinatorics (Proc. Capital Conf., George Washington Univ., Washington, D.C., 1973)'',
Lecture Notes in Mathematics 406. Springer, Berlin, 1974, pp.~45--51.

\bibitem{KS} 
Klotz, W., and Sander, T.,
Integral Cayley graphs over abelian groups,
Electron. J. Combin. 17 (2010), no.~1, Research Paper 81, 13 pp.

\bibitem{MM}
Miller, G.A. and Moreno, H. C., 
Non-abelian groups in which every subgroup is abelian, 
Trans. Amer. Math. Soc. 4 (1903), 398--404.

\bibitem{RO} 
Robinson, D. J. S.,
A Course in the Theory of Groups, 2nd ed., 
Springer, 1996.

\end{thebibliography}
\end{document}